\documentclass[11pt]{article} 
\usepackage[utf8]{inputenc}
\usepackage[T1]{fontenc}
\usepackage[english]{babel}
\makeatletter
\newcommand{\note}[1]{{\textcolor{red}{[#1]}}\@latex@warning{Note: #1}}
\makeatother
\usepackage{amsmath,amsfonts,amssymb,amsthm,mathtools}
\usepackage[shortlabels]{enumitem}

\usepackage[mono=false]{libertine}
\usepackage[cmintegrals,libertine]{newtxmath}
\usepackage[cal=euler, scr=boondoxo]{mathalfa}

\useosf
\usepackage[a4paper,vmargin={3.5cm,3.5cm},hmargin={2.5cm,2.5cm}]{geometry}
\usepackage[font={small,sf}, labelfont={sf,bf}, margin=1cm]{caption}
\usepackage[dvipsnames]{xcolor}
\usepackage[pdftex,colorlinks=true]{hyperref}
\usepackage[pdftex]{color,graphicx}

\usepackage{stackrel}
\usepackage{soul}
\hypersetup{linkcolor=BrickRed,citecolor=NavyBlue,urlcolor=NavyBlue}

\theoremstyle{plain}
\newtheorem{theorem}{Theorem}
\newtheorem{corollary}[theorem]{Corollary}
\newtheorem{proposition}[theorem]{Proposition}
\newtheorem{lemma}[theorem]{Lemma}
\theoremstyle{definition}

\newcommand{\ind}{\mathbf{1}}

\newcommand{\Z}{\mathbb{Z}}

\newcommand{\rmd}{\mathrm{d}}

\newcommand{\map}{\mathfrak{m}}
\newcommand{\intersec}{\operatorname{i}}
\newcommand{\chords}{\mathcal{C}}
\newcommand{\maps}{\mathcal{M}}
\newcommand{\interval}[1]{[\kern-2.8pt[#1]\kern-2.8pt]}

\DeclareRobustCommand{\qbinom}{\genfrac[]{0pt}{}}

\begin{document}

\title{\bf Double-scaled SYK from boundary metrics of planar maps}

\author{\textsc{Timothy Budd}\footnote{Email: \texttt{\href{mailto:t.budd@science.ru.nl}{t.budd@science.ru.nl}}}\\
{\small IMAPP, Radboud University, Nijmegen, The Netherlands.}}
\date{\today}
\maketitle

\vspace{-6mm}
\begin{abstract}
The enumeration of planar maps with control on the boundary metric, i.e.\ the pseudometric induced on the outer face of the map by its bulk graph distance metric, is a difficult problem in general.
However, we show that for a family of bipartite planar map models with special $q$-deformed face weights that arise in the physics context of the double-scaled Sachdev-Ye-Kitaev model (DSSYK) the enumeration admits a very simple answer.
Encoding the boundary metric of a bipartite planar map by its so-called geodesic chord diagram, we prove that the weighted enumeration depends only on the crossing number of the chord diagram. 
At fixed perimeter, the induced law of the geodesic chord diagram in these planar map models coincides exactly with the chord diagram representation of the DSSYK model.
\end{abstract}

\begin{figure}[h!]
	\centering
	\includegraphics[width=\linewidth]{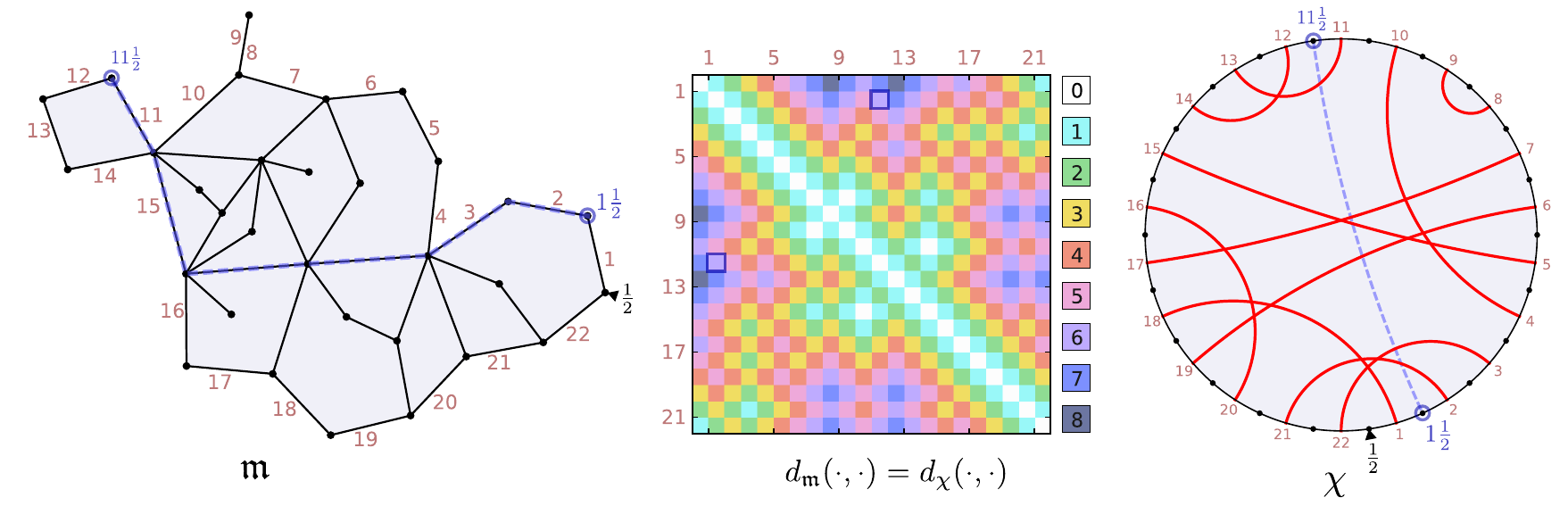}
	\caption{A bipartite planar map $\map$ (left) and a chord diagram $\chi$ (right) sharing the same pseudometric on $\{ \tfrac12, 1\tfrac12,\ldots,21\tfrac12\}$ (middle). A geodesic of length $d_{\map}(1\tfrac12,11\tfrac12)=6$ in $\map$ and a slice in $\chi$ with $d_{\chi}(1\tfrac12,11\tfrac12) = 6$ crossing chords are shown in blue.\label{fig:mapandchord}} 
\end{figure}

\section{Introduction}

The Sachdev-Ye-Kitaev (\emph{SYK}) model \cite{Sachdev1993,Sachdev2010,Kitaev2015} is a quantum-mechanical system of interacting Majorana fermions that has received a lot of attention in recent years in light of proposed holographic dualities with two-dimensional quantum gravity models \cite{Maldacena2016,Jensen2016,Cotler2017,Kitaev2018}.
Given the strong link between two-dimensional quantum gravity and discrete random geometry via random planar maps (see \cite{Gwynne2020,Sheffield2023,Budd2023} for recent reviews), it is natural to explore random geometric aspects to the holographic dictionary.

A natural starting point is the double-scaled limit of SYK (known as \emph{DSSYK}) which admits a combinatorial formulation in terms of chord diagrams  \cite{Berkooz2018,Berkooz2018a,Berkooz2025} with a free parameter $q \geq 0$, which has the interpretation as a Boltzmann weight for each pairwise crossing of chords.
This chord diagram system already has a random geometric flavor to it, which becomes even more apparent in the $q\to 1$ limit.
It is predicted \cite{Maldacena2016,Kitaev2018,Lin2022} that in this limit one recovers the hyperbolic geometry of Jackiw-Teitelboim (\emph{JT}) gravity \cite{Jackiw1985,Teitelboim1983,Mertens2023} on the Poincar\'e disk and its Schwarzian field theory description on the circle.
It has further been proposed that DSSYK at $q \neq 1$ may be dual to a $q$-deformed version of JT gravity \cite{Berkooz2023,Lin2022,Lin2023,Blommaert2024,Blommaert2025,Bossi2025} where geodesic distances are quantized.

An opportunity for a two-dimensional geometry interpretation of DSSYK is provided by the observation in \cite[Section~VIII.D]{Jafferis2023} that the partition function of DSSYK agrees with the large-$N$ limit of a special Hermitian 1-matrix model.
It is often referred to as the \emph{ETH 1-matrix model} after the Eigenstate Thermalization Hypothesis (ETH), which is an ansatz for the quantum-chaotic properties of operators featuring in the SYK model. 
Since the matrix model admits a formal power series expansion involving the enumeration of combinatorial planar maps (also known as ribbon graphs or 't~Hooft diagrams) with special degree-dependent weights associated to the faces, it leads to an identification of the DSSYK partition function with that of a special planar map model, that we refer to as the \emph{ETH planar map model}.

The main purpose of this work is to demonstrate that the correspondence between DSSYK and the ETH planar map model extends significantly beyond the partition function by relating chord number statistics to graph distances between boundary vertices.

Graph distance statistics in planar map models with general face weights have been studied extensively in past decades via a variety of combinatorial methods, including peeling processes \cite{Watabiki1995,Ambjoern1995,Angel2003,Budd2016,Curien2023}, tree bijections \cite{Albenque2015,Bouttier2004,Schaeffer1998} and slice decompositions \cite{Bouttier2012,Bouttier2019}.
Particularly the tree bijections were crucial in obtaining metric scaling limits of planar map models towards the Brownian sphere \cite{LeGall2013,Miermont2013} (and its stable cousins \cite{LeGall2010,Curien2025}), which was later demonstrated to agree with the metric of Liouville gravity \cite{Gwynne2021,Miller2020}.
When it comes to exact graph distance statistics at the discrete level, geodesic 2-point functions \cite{Ambjoern1995,Bouttier2003,Bouttier2012,Ambjoern2013,Ambjoern2016}, which control the distance between a pair of uniform vertices, and 3-point functions \cite{Bouttier2008,Fusy2014,Ambjoern2016}, controlling jointly the distances between a triple of uniform vertices, are known for various planar maps models on the 2-sphere.
For the disk topology, one can use tree bijections or slice decompositions to control distances from all boundary vertices to a single uniform bulk vertex \cite{Bouttier2009,Bettinelli2015}, or to a single distinguished boundary vertex \cite{Bouttier2012}.
What these exact results have in common is that the controlled graph distances can be realized by geodesics in the planar map that are pairwise non-crossing.
This is generally not the case, for instance, for geodesic $n$-point functions on the sphere for $n \geq 4$ (see also the discussion in \cite[Sec.~1.5]{Bouttier2019}).
We will show that for the ETH planar map model it is possible to control distances between all pairs of boundary vertices simultaneously.
As far as we are aware this is the first example of such enumerative control involving (many) crossing geodesics.
We note, however, that the result is non-probabilistic a priori, since the ETH planar map model involves weights of alternating sign.

\subsection{Main result}

The two combinatorial families featuring in this work are chord diagrams and bipartite planar maps, so we start by introducing these families. 
For real numbers $a \leq b$ such that $b-a$ is an integer we denote by $\interval{a,b}\coloneqq\{a,a+1,\ldots,b\}$ the (shifted) integers in the interval $[a,b]$.
A \emph{chord diagram} $\chi$ of size $|\chi| = 2n$ is a fixed-point-free involution on $\interval{1,2n}$.
Equivalently, it is a partition of $\interval{1,2n}$ into two-element sets, called the \emph{chords} of $\chi$, which are conveniently illustrated by positioning $\interval{1,2n}$ in counterclockwise order on the circle with a chord between $i$ and $\chi(i)$ (see the right illustration in Figure~\ref{fig:mapandchord}). 
We denote by $\chords$ the set of chord diagrams and by $\chords_{2n} \subset \chords$ those of size $2n$.
In general, for $a<a'$ and $b<b'$, we say $\{a,a'\}$ \emph{crosses} $\{b,b'\}$ whenever $a < b < a' < b'$ or $b < a < b' < a'$.
The total number of crossings of $\chi$ is
\begin{align}
	\intersec(\chi) = |\{ \text{pairs }\{A,B\}\text{ of chords of }\chi : A\text{ crosses }B \}|.
\end{align}
For half-integers $a,b \in \interval{\tfrac12,2n-\tfrac12}$, we denote by $d_\chi(a,b)$ the number of chords crossing $\{a,b\}$,
\begin{align}
	d_\chi(a,b) = |\{ \text{chords }B\text{ of }\chi : B\text{ crosses }\{a,b\}\}|.\label{eq:chorddist}
\end{align} 
This is often referred to in the physics literature as the \emph{chord number} for the slice between $a$ and $b$ \cite{Lin2022,Berkooz2023}.
A chord crossing $\{a,b\}$ must also cross at least one of $\{a,c\}$ or $\{c,b\}$ for every $c\in \interval{\tfrac12,2n-\tfrac12}$, implying the triangle inequality $d_{\chi}(a,b) \leq d_{\chi}(a,c) + d_\chi(c,b)$.
Therefore $d_{\chi}$ turns $\interval{\tfrac12,2n-\tfrac12}$ into a pseudometric space, because in addition it is symmetric and obeys $d_{\chi}(a,a) = 0$.
It is generally not a metric because we may have $d_\chi(a,b)=0$ for $a\neq b$ when no chords separate $a$ and $b$. 
Observe that $\chi$ is uniquely determined by its pseudometric.

A \emph{(planar) map} $\map$ is a connected (multi)graph embedded properly in the plane, viewed up to orientation-preserving homeomorphisms of the plane (see the left illustration in Figure~\ref{fig:mapandchord}).
We always assume a planar map to carry a distinguished corner of the outer face, called the \emph{root corner} (indicated by the little triangle in Figure~\ref{fig:mapandchord}).
The \emph{degree} of a face of $\map$ is the number of corners in that face, and the \emph{perimeter} $|\partial\map|$ is the degree of the outer face.
A map that has all faces of even degree is called \emph{bipartite}.
We denote by $\mathcal{M}$ the set of bipartite planar maps and by $\mathcal{M}_{2n}$ those of perimeter $2n$.
We may naturally label the corners in the outer face of a bipartite map $\map$ of perimeter $2n$ by $\interval{\tfrac12,2n-\tfrac12}$ in counterclockwise order, assigning $\tfrac12$ to the root corner.
Then we obtain a natural pseudometric $d_\map:\interval{\tfrac12,2n-\tfrac12}^2 \to \Z_{\geq0}$ by setting $d_\map(a,b)$ to be the graph distance in $\map$ between the pair of (vertices adjacent to the) corners with labels $a$ and $b$.
As we will see in Proposition~\ref{prop:mapchord} below, there exists a unique chord diagram that we denote by $\chi = \mathsf{Geod}(\map)$ and call the \emph{geodesic chord diagram} of $\map$, such that $d_\map = d_\chi$.
We denote by $\maps_\chi \subset \maps$ the collection of maps that have $\chi$ as their geodesic chord diagram.

Given a weight sequence $\mathbf{t} = (t_2,t_4,\ldots)$, it is customary to consider the \emph{Boltzmann weight}
\begin{align}
	w_{\mathbf{t}}(\map) = \prod_{f\in \mathsf{Faces}(\map)} t_{\deg(f)}
\end{align}
for a map $\map \in \maps$, where the product runs over all faces $\mathsf{Faces}(\map)$ excluding the outer face.
This corresponds precisely to the weight associated to such a map in the expansion of a matrix model with \emph{potential derivative}
\begin{equation}
	V'(x) = x - \sum_{k\geq 1} t_{2k} x^{2k-1}.
\end{equation}
The corresponding \emph{disk function} for maps of perimeter $2n$ is the formal multivariate generating series
\begin{equation}
	F_{2n}(\mathbf{t}) = \sum_{\map\in\maps_{2n}} w_{\mathbf{t}}(\map).
\end{equation}
Since these maps naturally partition according to their geodesic chord diagram, we introduce the \emph{geodesic disk function}
\begin{align}
	F(\mathbf{t};\chi) = \sum_{\map\in \maps_\chi} w_{\mathbf{t}}(\map),\label{eq:geoddiskfunction}
\end{align}
so that $F_{2n}(\mathbf{t}) = \sum_{\chi \in \chords_{2n}} F(\mathbf{t};\chi)$.
By convention, we let $F(\mathbf{t};\emptyset) = 1$ for the empty chord diagram (of size $0$), resulting from the bipartite planar map consisting of a single vertex (of perimeter $0$).
It is a very difficult problem to characterize these series for all but the simplest chord diagrams $\chi$, because it involves controlling distances realized by geodesics in $\map$ that may intersect in many ways.

However, the situation is different for the ETH planar map model with parameter $q$, which corresponds to the choice of potential derivative \cite[Sec.~VIII.D]{Jafferis2023} 
\begin{align}
	V_q'(x) = 2\sqrt{1-q} \sum_{\ell\geq 1} (-1)^{\ell-1} q^{\binom{\ell}{2}} T_{2\ell-1}\left(\frac{\sqrt{1-q}}{2} x\right),
\end{align}
where $T_{n}$ is the Chebyshev polynomial of the first kind satisfying $T_n(\cos \theta) = \cos(n\theta)$.
The corresponding weight sequence $\mathbf{t}(q) = (t_2(q),t_4(q),\ldots)$ is
\begin{align}
	t_{2k}(q) = \delta_{k,1} + (q-1)^k \sum_{\ell\geq k} q^{\binom{\ell}{2}} \frac{2\ell-1}{2k-1} \binom{\ell+k-2}{\ell-k},\label{eq:specweights}
\end{align}
which we understand as formal power series in $q$.
These weights were fine-tuned in \cite{Jafferis2023} by requiring their disk function to match the DSSYK partition function,
\begin{align}
	F_{2n}(\mathbf{t}(q)) = \sum_{\chi\in\chords_{2n}} q^{\intersec(\chi)} = m_{2n}(q),\label{eq:partitionfunction}
\end{align}
where $m_{2n}(q)$ is the Touchard-Riordan polynomial \cite{Touchard1952,Riordan1975} (see Section~\ref{sec:enumintro}).
Our main result is that in this case the geodesic disk function only depends on the crossing number $\intersec(\chi)$. 

\begin{theorem}\label{thm:main}
	For every $\chi \in \chords$ the geodesic disk function \eqref{eq:geoddiskfunction} obeys the formal power series identity
\begin{equation}
	F(\mathbf{t}(q);\chi) = q^{\intersec(\chi)}.\label{eq:mainidentity}
\end{equation}
\end{theorem}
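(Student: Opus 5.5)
We prove \eqref{eq:mainidentity} by a recursive decomposition that is common to both sides, so that the identity follows by induction on the size of $\chi$.

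On the chord side, take the chord $\{1,\chi(1)\}$ through the root. Let $c$ be the number of chords crossing it. Removing this chord leaves a chord diagram $\chi'$ of size $|\chi|-2$, and
\[
\intersec(\chi)=\intersec(\chi')+c .
\]
The number $c$ can be read off the pseudometric: it equals $d_\chi(\tfrac12,\chi(1)-\tfrac12)$ up to the obvious correction. So it suffices to find, on the map side, a decomposition of $\maps_\chi$ in which the weights factor as
\[
F(\mathbf{t}(q);\chi)=q^{c}\,F(\mathbf{t}(q);\chi').
\]

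On the map side, I would use a peeling or slice-type decomposition at the root edge, in the spirit of Tutte's equation but refined by the boundary metric. The root corner $\tfrac12$ and its neighbour $1\tfrac12$ share the boundary edge $1$. In a bipartite map the vertices at the two ends of this edge are at distance one, so $d_\map(\tfrac12,1\tfrac12)=1$. This matches the fact that exactly the one chord through point $1$ separates $\tfrac12$ from $1\tfrac12$.

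The plan is as follows.
\begin{enumerate}[(i)]
\item Peel the root edge. Either it is a bridge-like boundary edge glued to the rest of the map, or it borders an inner face of degree $2k$. Removing it yields a map of smaller size together with an explicit change of boundary.
\item Track how the pseudometric changes under this operation, in terms of geodesic chord diagrams, using Proposition~\ref{prop:mapchord}.
\item Sum over all maps in $\maps_\chi$ with the same reduced data.
\end{enumerate}

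Step (ii) is where the special weights \eqref{eq:specweights} must enter. When a face of degree $2k$ is removed, the new boundary segment of length $2k-1$ must be consistent with geodesic distances. The sum over $k$, weighted by $t_{2k}(q)$, together with the inductively known values $q^{\intersec}$ for the smaller chord diagrams, should then telescope into the single factor $q^{c}$.

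Concretely, I expect to need a \emph{local identity} of the following form. For a boundary arc of $2k-1$ new edges inserted between two boundary points at prescribed distance, the signed sum of $t_{2k}(q)$ times the $q$-counts of the possible chord configurations on that arc equals $q^{c}$ if the arc collapses to the configuration prescribed by $\chi$, and $0$ otherwise. This is a $q$-binomial or Chebyshev-type inversion identity. The coefficients $\frac{2\ell-1}{2k-1}\binom{\ell+k-2}{\ell-k}$ are precisely the inverse of the Chebyshev expansion $T_{2\ell-1}$, which is why $V_q'$ is built from Chebyshev polynomials. I would verify this identity first, by treating each $\ell$ term separately, since $q^{\binom{\ell}{2}}$ should count the crossings among $\ell$ mutually crossing chords.

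As a consistency check, summing over $\chi\in\chords_{2n}$ must reproduce \eqref{eq:partitionfunction}. One can also use \eqref{eq:partitionfunction} for base cases, for example diagrams that are unions of nested chords.

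The main obstacle is step (ii). Removing the root face can change distances between boundary points far apart, because geodesics may pass through that face. So one must show that the geodesic chord diagram of the reduced map is determined by $\chi$ and local data, rather than depending on the global geometry. If a direct peeling fails, I would instead work with all boundary distances simultaneously. The idea is to express $F(\mathbf{t};\chi)$ via inclusion--exclusion over "distance at most" constraints, which are easier to control because they are monotone under gluing. The alternating signs of $\mathbf{t}(q)$ would then realize the Möbius inversion that recovers exact distances.
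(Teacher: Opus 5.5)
There is a genuine gap. Your frame, a root-edge (Tutte-type) deletion refined by the boundary metric, matches the paper's. But the concrete reduction you set up cannot be realized, and the proposal does not contain the proof's actual content.

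\textbf{The reduction target.} Deleting the root edge removes a chord from the geodesic chord diagram only in the bridge case, which forces $c=0$. When the edge borders an inner face of degree $2p$, the reduced map $\map'$ has perimeter $2n+2p-2\ge 2n$. Then $\mathsf{Geod}(\map)$ is obtained from $\chi'=\mathsf{Geod}(\map')$ by the shortcut rule \eqref{eq:shortcutdef}, which is a non-local, many-to-one operation. So there is no map-side identity of the form $F(\mathbf{t}(q);\chi)=q^{c}F(\mathbf{t}(q);\chi\setminus\{1,\chi(1)\})$ to aim for. Your goal of showing that ``the geodesic chord diagram of the reduced map is determined by $\chi$ and local data'' is false, because many $\chi'$ give the same $\chi$.

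What is true, and what you are missing, is the reverse direction together with a classification of the fiber. First, $\chi=\mathsf{Short}_{n,p}(\chi')$. Second, by Proposition~\ref{prop:shortsufficient}, which rests on the planarity and geodesic-swapping arguments of Lemmas~\ref{lem:shortplanarity}--\ref{lem:nonminimalchords}, $\chi'$ lies in the fiber iff both of the following hold:
\begin{enumerate}[(i)]
\item every nonminimal chord of $\chi$ is a chord of $\chi'$;
\item each window $u_i,\dots,u_{i+k-1}$ of endpoints of the $k$ minimal chords carries $k$ distinct chords of $\chi'$.
\end{enumerate}
The statement to verify is then that $\chi\mapsto q^{\intersec(\chi)}$ solves the refined Tutte equation. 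That is, $\sum_{p\ge1}t_{2p}(q)\sum_{\mathsf{Short}_{n,p}(\chi')=\chi}q^{\intersec(\chi')}$ equals $q^{\intersec(\chi)}$ if $\ell_\chi(2n)>0$ and $0$ otherwise, together with the product identity in the bridge case.

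\textbf{The local identity and the induction.} Your ``local identity'' is neither formulated nor proved. The mechanism you suggest, Chebyshev inversion handled one $\ell$ at a time, is not what makes it work. The fiber sum is computed by splitting $\chi'$ into three parts:
\begin{enumerate}[(i)]
\item a top part fixed by $\chi$;
\item a middle part in $\mathcal{C}_{k,k,\ell}$, counted by $q^{\binom{k}{2}-\binom{\ell}{2}}S_q(k,\ell)$ (Proposition~\ref{prop:specialslice});
\item a bottom part counted via \eqref{eq:distinterpret} by $\langle x^{2p-1}\widetilde H_{2\ell-1}(x|q)\rangle_q$.
\end{enumerate}
Summing against $t_{2p}(q)$ then needs the $q$-Hermite expansion of $V_q'$, namely $\langle V_q'\widetilde H_{2\ell-1}\rangle_q=q^{\binom{\ell}{2}}s_q(\ell,1)$ (Lemma~\ref{lem:potentialinqhermite}). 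For fixed $\ell$ this resums infinitely many Chebyshev terms through a $q$-series identity. Then $q$-Stirling inversion gives $q^{\intersec(\chi)}(1-\delta_{k,1})$.

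Finally, induction on $|\chi|$ cannot close. The recursion expresses $F(\chi)$ through $F(\chi')$ with $|\chi'|\ge|\chi|$, with equality when $p=1$. You need a simultaneous induction on the order in $q$ and on the size, using $t_{2p}(q)=O(q)$, with plane trees as the base case at order $q^0$. Appealing to \eqref{eq:partitionfunction} for base cases, or to an unspecified inclusion--exclusion over distance constraints, does not replace any of these steps.
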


\noindent
Since $F_{2n}(\mathbf{t}(q)) = \sum_{\chi \in\chords_{2n}} F(\mathbf{t}(q);\chi)$, this gives a new proof of the equality of partition functions \eqref{eq:partitionfunction} while presenting a significant refinement of the correspondence.

\paragraph{Correlation functions.}
One way of appreciating the refined correspondence is by noting that all joint moments of the pseudometric observables agree between the two models.
Indeed, if $n,k\geq 1$ and $a_1,\ldots,a_k,b_1,\ldots,b_k \in \interval{\tfrac12,2n-\tfrac12}$ are arbitrary points on the circle and $f: \mathbb{Z}_{\geq 0}^k \to \mathbb{R}$ an arbitrary function, then Theorem~\ref{thm:main} shows that 
\begin{align}
	\sum_{\map\in\maps_{2n}} f\big(d_{\map}(a_1,b_1),\ldots,d_{\map}(a_k,b_k)\big)\,w_{\mathbf{t}(q)}(\map) = \sum_{\chi\in\chords_{2n}} f\left(d_{\chi}(a_1,b_1),\ldots,d_{\chi}(a_k,b_k)\right) q^{\intersec(\chi)}.\label{eq:correlations}
\end{align}
Correlation functions in DSSYK are more often discussed in terms of chord diagrams \emph{coupled to matter} \cite{Berkooz2018a,Berkooz2018,Lin2023}, so let us rephrase our results in this language.
A \emph{two-type chord diagram} is a chord diagram in which each chord is colored red or blue, and called, respectively, a \emph{Hamiltonian chord} and a \emph{matter chord} (see Figure~\ref{fig:correlationfunction}).
To a two-type chord diagram $\tau$ with $k$ Hamiltonian chords and $n$ matter chords we may naturally associate a Hamiltonian chord diagram $\chi \in \chords_{2k}$ and a matter chord diagram $\mu \in \chords_{2n}$ by deleting all chords of the other type.
We introduce the positions $s_1, \ldots, s_{2n}\in\interval{\tfrac12,2k-\tfrac12}$ by letting $s_j - \tfrac12$ be the number of endpoints of Hamiltonian chords in $\tau$ preceding the $j$th endpoint of a matter chord.
Then the \emph{matter correlator} can be defined as
\begin{align}
	C_{2k, \mu}^{s_1,\ldots,s_{2n}}(q,\tilde{q}) \coloneqq\sum_{\tau} q^{\intersec_{\mathrm{HH}}(\tau)}\tilde{q}^{\intersec_{\mathrm{MH}}(\tau)}, \label{eq:mattercorr}
\end{align}
where the sum is over all two-type chord diagrams $\tau$ of size $2k+2n$ sharing the same matter chord diagram $\mu$ and positions $s_1, \ldots, s_{2n}$, while $\intersec_{\mathrm{HH}}(\tau)$ counts the crossings between pairs of Hamiltonian chords and $\intersec_{\mathrm{MH}}(\tau)$ the crossings between matter chords and Hamiltonian chords.
For example, the case $\mu=\{\{1,2\}\}$ gives the 2-point function, $\mu = \{\{1,2\},\{3,4\}\}$ the uncrossed 4-point function and $\mu = \{\{1,3\},\{2,4\}\}$ (as in Figure~\ref{fig:correlationfunction}) the crossed 4-point function or \emph{out-of-time order correlator} \cite{Berkooz2018,Lin2023,Jafferis2023}.
\begin{figure}[h]
	\centering
	\includegraphics[width=.94\linewidth]{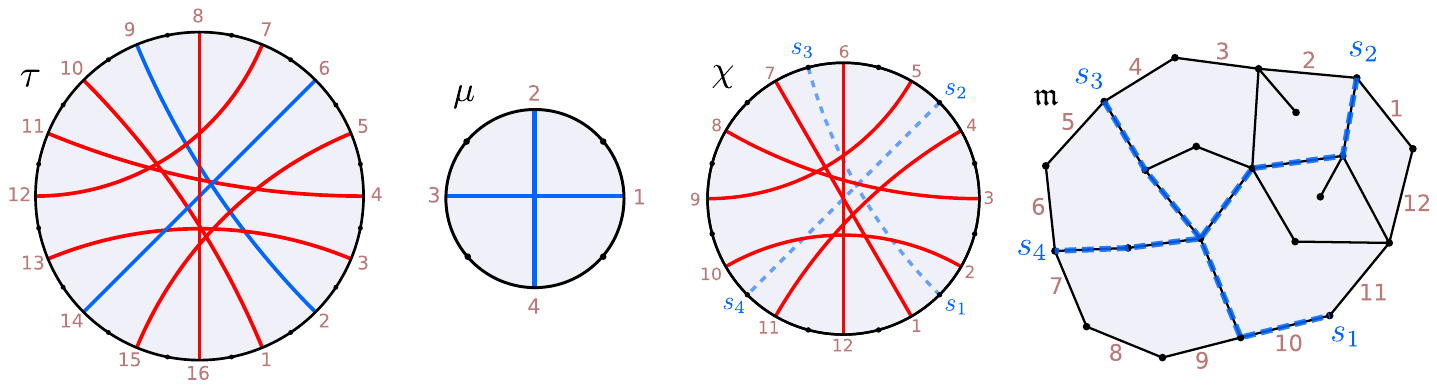}
	\caption{Example of a two-type chord diagram $\tau$ with $k=6$ Hamiltonian chords (red) and $n=2$ matter chords (blue). In this case there are $\intersec_{\mathrm{MH}}(\tau) = 9$ crossings between matter and Hamiltonian chords. The diagram $\tau$ is equivalently described by the matter chord diagram $\mu$ and the Hamiltonian chord diagram $\chi$, together with the positions $s_1,\ldots,s_{2n}$ of the endpoints of matter chords in the latter ($1\tfrac12,4\tfrac12,6\tfrac12,10\tfrac12$ in this case). If $\chi = \mathsf{Geod}(\map)$ is the geodesic chord diagram of a planar map $\map$, then $\intersec_{\mathrm{MH}}(\tau)$ equals the sum over all chords $\{a,b\}$ of $\mu$ of the graph distance between the corners labelled $s_a$ and $s_b$. Geodesics realizing these graph distances with total length $\intersec_{\mathrm{MH}}(\tau) = 9$ are shown in dashed blue. \label{fig:correlationfunction}}
\end{figure}

The two-type chord diagrams $\tau$ appearing in \eqref{eq:mattercorr} are bijectively encoded by their Hamiltonian chord diagram $\chi$ such that $\intersec_{\mathrm{HH}}(\tau) = \intersec(\chi)$ and the number of Hamiltonian chords crossed by the chord corresponding to the chord $\{j,\mu(j)\}$ of $\mu$ is $d_{\chi}(s_j,s_{\mu(j)})$. 
The matter correlator is thus of the form of the right-hand side of \eqref{eq:correlations}, so that we can identify it with the planar map correlator
\begin{align}
	C_{2k, \mu}^{s_1,\ldots,s_{2n}}(q,\tilde{q}) = \sum_{\map\in\maps_{2k}}  \tilde{q}^{\frac12 \sum_{j=1}^{2n} d_{\map}(s_j,s_{\mu(j)})}w_{\mathbf{t}(q)}(\map),
\end{align}
where the $\tfrac12$ in the exponent appears because the sum counts each chord twice.

\paragraph{Formal versus real series.} 
Theorem~\ref{thm:main} is formulated at the level of formal power series in $q$, which is justified by the fact that at each order in $q$ only a finite number of maps contribute non-trivially.
One may wonder what happens if we take $q$ to be a positive real number.
The series appearing in the weights $t_{2k}(q)$ in \eqref{eq:specweights} each have radius of convergence equal to $1$, but for any $0< q < 1$ the sign of $t_{2k}(q)$ is alternating in $k$.
As a consequence, the sign of $w_{\mathbf{t}(q)}(\map)$ is $(-1)^{\mathsf{V}(\map)-n-1}$ where $\mathsf{V}(\map)$ is the number of vertices of $\map$ and $2n$ its perimeter.
This means that the sum in the geodesic disk function \eqref{eq:geoddiskfunction} converges absolutely only when the geodesic disk function $F(|\mathbf{t}(q)|;\chi)$ with positive weights $|w_{\mathbf{t}(q)}(\map)| = w_{|\mathbf{t}(q)|}(\map)$ converges.
It is not difficult to see that the convergence does not depend on the choice of $\chi$ (as long as $\chi \neq \emptyset$) and is equivalent to the disk function $F_{2n}(|\mathbf{t}(q)|) < \infty$ converging for some $n \geq 1$. 
In the literature on planar maps a weight sequence $|\mathbf{t}(q)|$ satisfying the latter condition is said to be \emph{admissible}, see \cite{Marckert2007} and \cite[Sec.~3.3.2]{Curien2023}.
This is known \cite{Marckert2007} to be equivalent to the equation
\begin{align}
	r - \sum_{k=1}^\infty |t_{2k}(q)|\, \binom{2k-1}{k} r^k = 1\label{eq:admissible}
\end{align}
having a positive solution $r > 0$.
As will be checked in Corollary~\ref{cor:probabilistic} below, this is the case for sufficiently small $q$ but not for $q$ close to $1$.
Of course, if it converges absolutely, then $F(\mathbf{t}(q);\chi)=q^{\intersec(\chi)}$ holds as equality in the reals.

\paragraph{Probabilistic interpretation.} Even though the alternating sign of $t_{2k}(q)$ prevents a direct probabilistic interpretation of Theorem~\ref{thm:main}, the previous discussion provides a somewhat artificial one.
If $\mathbf{t}=(t_2\geq 0,t_4\geq 0, \ldots)$ is an admissible weight sequence and $F_{2n}(\mathbf{t})$ is the corresponding disk function, then the \emph{$\mathbf{t}$-Boltzmann planar map} $M_{2n}$ of perimeter $2n$ is the random map with probability distribution $\mathbb{P}(M_{2n} = \map) = w_{\mathbf{t}}(\map) / F_{2n}(\mathbf{t})$ for $\map\in\maps_{2n}$.

\begin{corollary}\label{cor:probabilistic}
	The weight sequence $|\mathbf{t}(q)|$ is admissible for $0 < q < 0.042$ and not admissible for $0.21 < q \leq 1$. 
	If $|\mathbf{t}(q)|$ is admissible, then the $|\mathbf{t}(q)|$-Boltzmann planar map $M_{2n}$ satisfies
	\begin{equation}
		\mathbb{P}(\mathsf{Geod}(M_{2n}) = \chi, \,\mathsf{V}(M_{2n})\text{ even}) \,\,-\,\, \mathbb{P}(\mathsf{Geod}(M_{2n}) = \chi,\,\mathsf{V}(M_{2n})\text{ odd})= \frac{(-1)^{n-1}}{F_{2n}(|\mathbf{t}(q)|)}\, q^{\intersec(\chi)}
	\end{equation}
	for all $n \geq 1$ and $\chi \in \mathcal{C}_{2n}$. 
\end{corollary}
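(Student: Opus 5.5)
We must prove Corollary~\ref{cor:probabilistic}. It has two parts: the admissibility range, and the signed probability identity.

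\emph{The identity.} Assume $|\mathbf{t}(q)|$ is admissible. Then, as discussed above, $F(|\mathbf{t}(q)|;\chi)\le F_{2n}(|\mathbf{t}(q)|)<\infty$, so the series defining $F(\mathbf{t}(q);\chi)$ converges absolutely for real $q$. We need two facts.

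\begin{enumerate}
\item For $0<q<1$ the weights alternate in sign, $\operatorname{sign} t_{2k}(q) = (-1)^{k-1}$. This follows from the sign of the factor $(q-1)^k$ in \eqref{eq:specweights} together with $t_2>0$.
\item For a bipartite map with faces of degrees $2k_f$, Euler's formula gives $\sum_f (k_f-1) = \mathsf{V}(\map)-n-1$. Indeed, counting edges gives $E = n+\sum_f k_f$, and Euler's formula reads $\mathsf{V}-E+(\#\text{inner faces}+1)=2$.
\end{enumerate}

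Together these give $w_{\mathbf{t}(q)}(\map) = (-1)^{\mathsf{V}(\map)-n-1}\,w_{|\mathbf{t}(q)|}(\map)$. Summing over $\map\in\maps_\chi$, Theorem~\ref{thm:main} (valid as a real identity by absolute convergence) yields
\[
q^{\intersec(\chi)} = (-1)^{n-1}\sum_{\map\in\maps_\chi}(-1)^{\mathsf{V}(\map)}w_{|\mathbf{t}(q)|}(\map).
\]
Dividing by $F_{2n}(|\mathbf{t}(q)|)$ and multiplying by $(-1)^{n-1}$ gives the claimed difference of probabilities.

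To justify applying Theorem~\ref{thm:main}, a formal power-series identity, at real $q$, I argue as follows. Each $t_{2k}(q)$ is analytic for $|q|<1$. If $\sum_\map w_{|\mathbf{t}(q_0)|}(\map)<\infty$, then the sum converges uniformly on $[0,q_0]$ by monotonicity, after checking that $|t_{2k}(q)|$ is increasing in $q$. Hence $F(\mathbf{t}(q);\chi)$ is analytic in a neighbourhood of $q$ and equals the formal series evaluated at $q$. This step needs a little care; checking the monotonicity of $|t_{2k}(q)|$ is the part I would verify first.

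\emph{The admissibility range.} By \cite{Marckert2007} it suffices to decide whether $g_q(r) := r-\sum_k |t_{2k}(q)|\binom{2k-1}{k}r^k$ reaches the value $1$ for some $r>0$. Write $|t_{2k}(q)| = (1-q)^k\sum_{\ell\ge k} q^{\binom{\ell}{2}}\frac{2\ell-1}{2k-1}\binom{\ell+k-2}{\ell-k}$ for $k\ge2$; for $k=1$, $t_2 = 1-(1-q)\sum_\ell q^{\binom\ell2}(2\ell-1)$, which is positive for small $q$.

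\begin{itemize}
\item \emph{Admissibility for $q<0.042$.} Bound the inner sums by their leading terms plus geometric tails, giving an explicit upper bound on $\sum_k |t_{2k}|\binom{2k-1}{k}r^k$. Then exhibit a specific $r$, say $r$ near $1+O(q)$, at which $g_q(r)\ge1$. Since $g_q(0)=0$, continuity gives a solution of $g_q(r)=1$.
\item \emph{Non-admissibility for $q>0.21$.} Use lower bounds from finitely many terms, showing that $\max_r g_q(r)<1$ or that the series diverges. Near $q=1$, where the weights blow up relative to $(1-q)^k$, I would instead show directly that $g_q(r)<1$ for all $r$ within the radius of convergence.
\end{itemize}

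These numerical estimates are the main obstacle. They are routine but fiddly, and a careful truncation with rigorous tail bounds, possibly computer-assisted, would be used to certify the constants $0.042$ and $0.21$.
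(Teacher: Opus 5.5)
\textbf{Your first fact is false, and the signed identity rests on it.}

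Since $(q-1)^k$ has sign $(-1)^k$ on $(0,1)$, \eqref{eq:specweights} gives $\operatorname{sign} t_{2k}(q)=(-1)^k$ for $k\ge 2$ (e.g.\ $t_4=q-2q^2+\cdots>0$). The weight $t_2$ follows the same pattern: $t_2(q)=1-(1-q)\sum_{\ell\ge1}(2\ell-1)q^{\binom{\ell}{2}}=-2q+3q^2-\cdots$. In fact $t_2(q)\le -q/2$ on $(0,1)$; this is the first estimate in the paper's own proof.

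Combine this with your Euler count $\sum_f k_f=\mathsf E(\map)-n$, which is correct. It gives $w_{\mathbf t(q)}(\map)=(-1)^{\mathsf E(\map)-n}\,w_{|\mathbf t(q)|}(\map)=(-1)^{\mathsf V(\map)+\mathsf F(\map)-n-1}\,w_{|\mathbf t(q)|}(\map)$, where $\mathsf F(\map)$ is the number of inner faces. The digon already shows the discrepancy: $w=t_2<0$ while $\mathsf V-n-1=0$. So Theorem~\ref{thm:main} yields the stated formula with $\mathsf V(M_{2n})$ replaced by $\mathsf V(M_{2n})+\mathsf F(M_{2n})=\mathsf E(M_{2n})+1$.

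The formula with $\mathsf V$ itself is false. Multiplied by $F_{2n}(|\mathbf t(q)|)$, its left-hand side equals $(-1)^{n-1}F(-\mathbf t(q);\chi)$. Take $n=1$, where $\maps_\chi=\maps_2$:
\begin{itemize}
\item the one-face maps contribute $-(t_2+2t_4+5t_6+\cdots)=q^2+O(q^3)$ to $F_2(-\mathbf t(q))$;
\item the two-face maps contribute exactly as they do in $F_2(\mathbf t(q))=1$. There they must cancel $t_2+2t_4+\cdots=-q^2+O(q^3)$, because maps with three or more inner faces are $O(q^3)$, so they contribute $+q^2+O(q^3)$.
\end{itemize}
Hence $F_2(-\mathbf t(q))=1+2q^2+O(q^3)\neq 1$.

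The paper's proof writes the same factor $(-1)^{v-n-1}$, so this slip is in the source as well. Your derivation of the identity is otherwise the same as the paper's, and it goes through once the parity is corrected.

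\textbf{Two smaller points.}

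First, your justification for evaluating the formal identity at real $q$ relies on $|t_{2k}(q)|$ being increasing in $q$. The paper explicitly reports that it could not prove this. Moreover, uniform convergence on a real interval does not by itself give analyticity: for complex $q$ the factor $|1-q|^k$ exceeds $(1-|q|)^k$, so the real majorant does not dominate directly. The paper simply treats the passage from the formal to the real identity under absolute convergence as immediate.

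Second, your admissibility outline matches the paper's: the same criterion, a lower bound on $g_q$ at one chosen $r$, and an upper bound on $\sup_r g_q$ from the first terms. However, it is not carried out. With your sign of $t_2$ you would also use $t_2$ in place of $|t_2|=-t_2$, which overestimates $g_q$ and invalidates the admissibility side. The paper's bounds are elementary:
\begin{itemize}
\item $q^{\binom{\ell}{2}}\le q^{\ell-1}$ gives $|t_{2k}(q)|\le -\delta_{k,1}+\frac{1+q}{q}\big(\frac{q}{1-q}\big)^k$. The resulting lower bound on $g_q$ at $r=\frac{1-q}{12q}$ exceeds $1$ for $q\le 0.0424$.
\item $|t_2|\ge q/2$ and $|t_4|\ge q/3$ give $g_q(r)\le r-\frac q2 r-qr^2\le\frac{(2-q)^2}{16q}<1$ for all $q>10-\sqrt{96}\approx 0.202$, uniformly up to $q=1$.
\end{itemize}
So no separate treatment near $q=1$ and no computer assistance is needed.
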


Based on numerical evaluation, it appears that $|t_{2k}(q)|$ is monotonically increasing in $q \in [0,1]$ for each $k\geq 1$, but we were unable to prove this.
If true it would follow that there exists a unique critical value $q_* \in [0.042,0.21]$, numerically given by $q_*\approx 0.0694$, such that $|\mathbf{t}(q)|$ is admissible if and only if $q \leq q_*$.
Moreover, the $|\mathbf{t}(q)|$-Boltzmann planar map $M_{2n}$ would be \emph{critical}, in the probabilistic sense that the number $\mathsf{V}(M_{2n})$ of vertices of $M_{2n}$ has infinite variance (see \cite{Marckert2007} and \cite[Sec.~5.2]{Curien2023}), precisely when $q=q_*$.

\subsection{Questions}
This work raises several natural follow-up questions:
\begin{enumerate}[1.]
	\item The identity \eqref{eq:mainidentity} of Theorem~\ref{thm:main} relies on massive cancellations, since $F(\mathbf{t}(q);\chi) = \sum_{\map\in\maps_\chi}w_{\mathbf{t}(q)}(\map)$ involves infinitely many contributions of maps $\map$ whose weight is $w_{\mathbf{t}(q)}(\map) = O(q^{\intersec(\chi)+1})$. Our proof observes these cancellations only indirectly in the final stage (Proposition~\ref{prop:tuttechord}), see also the outline in Section~\ref{sec:outline} below. \emph{Is there a simpler proof of Theorem~\ref{thm:main} by grouping maps in $\maps_\chi$ in a way that makes the cancellation explicit?}
	\item The limit $q\to 1$ is of significant interest because of the connection between SYK and the Schwarzian theory and JT gravity \cite{Maldacena2016,Kitaev2018,Saad2019}.
	On the map enumeration side, it was proposed by Okuyama in \cite{Okuyama2023} and later verified by Giacchetto, Maity \& Mazenc \cite{Giacchetto2025} and Do \& Norbury \cite{Do2025a}, that certain polynomials appearing in the topological recursion for the weights \eqref{eq:specweights} approach as $q\to 1$ the Weil-Petersson volumes of hyperbolic surfaces with geodesic boundaries.
	Unfortunately, this limit is far outside the regime of absolute convergence, so one has to take care in interpreting the planar map correspondence there.
	\emph{Does there exist a reasonable subfamily $\mathcal{M}' \subset \mathcal{M}$ of bipartite planar maps together with a natural weight function $w'_q(\map)$ such that $\sum_{\map \in \maps'_\chi} w_q'(\map)$ still gives $q^{\intersec(\chi)}$ with a regime of absolute convergence that extends towards $q=1$?}
	\item \emph{Is there an analogous relation between boundary metrics of maps and chord-like diagrams for topologies other than the disk, i.e.\ for surfaces of higher genus or with multiple boundaries?} The topological recursion of maps with weights \eqref{eq:specweights} has been studied in \cite{Okuyama2023}, giving rise to certain $q$-deformed Weil-Petersson volumes \cite{Okuyama2023,Giacchetto2025,Do2025a}. It is natural to address this question in light of a recent bijective approach to maps with so-called tight boundaries \cite{Bouttier2022,Bouttier2024}.
	\item The concrete correspondence of Theorem~\ref{thm:main} between the metric of planar maps and chord diagrams suggests that the planar map model with weights \eqref{eq:specweights}, arising in the large-$N$ limit of the ETH 1-matrix model, has something precise to say about a holographic gravitational dual of DSSYK. In particular, it elucidates the discreteness of bulk geodesic lengths by their identification with graph distances in the planar map. Several proposals for a bulk dual have been made in the literature, including a noncommutative $q$-deformation of the hyperbolic plane (or AdS$_2$) \cite{Berkooz2023,Lin2022,Lin2023}, and sine-dilaton gravity \cite{Blommaert2024,Blommaert2025,Bossi2025}. \emph{Is there a relation between the bulk Hilbert spaces in these proposals and the natural bulk Hilbert space one obtains from the map model by slicing along geodesics?}
\end{enumerate}

\subsection{Idea of the proof and outline}\label{sec:outline}

The disk functions $F_{2n}(\mathbf{t}) = \sum_{\map\in\maps_{2n}} w_{\mathbf{t}}(\map)$ of bipartite planar maps satisfy the Tutte equations \cite{Tutte1962}, known better as loop equations or Schwinger-Dyson equations in the physics literature,
\begin{align}
	F_{2n}(\mathbf{t}) = \sum_{p\geq 1} t_{2p} F_{2n+2p-2}(\mathbf{t}) + \sum_{\ell=0}^{n-1} F_{2\ell}(\mathbf{t})F_{2n-2\ell-2}(\mathbf{t}).
\end{align}
This follows directly from considering what happens when one deletes from a map $\map$ the edge sitting directly to the left of the root corner on the outer face: the first term accounts for the possibility of revealing a face of degree $2p$, leaving a map $\map_0$ with perimeter $2n+2p-2$, and the second for the situation where deletion disconnects the map into a pair of maps $\map_1$, $\map_2$ with perimeter $2\ell$ and $2n-2\ell-2$ respectively.
The general idea of the proof is to show that the geodesic disk functions $F(\mathbf{t};\chi)$ satisfy suitable refinements of the Tutte equations and that they have the unique solution $q^{\intersec(\chi)}$ in the case $\mathbf{t}=\mathbf{t}(q)$.

The reason that such refinements exist is that the boundary metric $d_\map$ of $\map$, and therefore also its geodesic chord diagram $\mathsf{Geod}(\map)$, is uniquely determined by the boundary metric $d_{\map_0}$ of the map $\map_0$ or the boundary metrics $d_{\map_1}, d_{\map_2}$ of the maps $\map_1$ and $\map_2$ obtained after the edge deletion.
The relation in the latter case is easy: the chord diagram $\mathsf{Geod}(\map)$ is obtained via a simple concatenation of $\mathsf{Geod}(\map_1)$ and $\mathsf{Geod}(\map_2)$.

The first case is more complicated: $\map$ is obtained from $\map_0$ by drawing an edge in the outer face of $\map_0$, which introduces a shortcut for the boundary metric and can therefore change the geodesic chord diagram significantly.
In Section~\ref{sec:shortcut} we identify this shortcutting operation on the level of chord diagrams by introducing a mapping $\mathsf{Short}_{n,p}$ such that $\mathsf{Geod}(\map) = \mathsf{Short}_{n,p}(\mathsf{Geod}(\map_0))$.
Subsequently, we classify (in Proposition~\ref{prop:shortsufficient}) all chord diagrams $\chi'$ such that $\mathsf{Short}_{n,p}(\chi') = \chi$.

In Section~\ref{sec:enum} we discuss the enumeration of chord diagrams with weight $q$ per crossing.
In particular, for any fixed chord diagram $\chi$ we explicitly enumerate the chord diagrams $\chi'$ satisfying $\mathsf{Short}_{n,p}(\chi') = \chi$ via a bijective decomposition of $\chi'$.
Proposition~\ref{prop:tuttechord} then verifies the refined Tutte equations at the level of chord diagrams.
It relies on a ``miracle'' for the weight sequence \eqref{eq:specweights} by which the final expression in the Tutte equation becomes a summation of the $q$-Stirling numbers of the first kind appearing in Lemma~\ref{lem:potentialinqhermite} against the $q$-Stirling numbers of the second kind from Proposition~\ref{prop:specialslice}, which are orthogonal and thus trivialize the summation.
In Section~\ref{sec:mainproof} we prove Theorem~\ref{thm:main} by showing the uniqueness of the solution via an inductive argument.
Corollary~\ref{cor:probabilistic} then follows with help of a few bounds for $0 < q < 1$.

\subsection*{Acknowledgments}

This work is part of the VIDI programme with project number VI.Vidi.193.048, which is financed by the Dutch Research Council (NWO).
The author gratefully acknowledges support from the Simons Center for Geometry and Physics, Stony Brook University, at which some of the research for this paper was performed during the program \emph{Random Geometry in Math and Physics}, 23 March - 1 May 2026.

\section{Chord diagrams and planar maps}

\subsection{Geodesic chord diagrams associated to planar maps}
Recall that we denote the chord diagrams of size $2n$ by $\mathcal{C}_{2n}$ and the bipartite planar maps of perimeter $2n$ by $\maps_{2n}$.
For $\chi\in\mathcal{C}_{2n}$ and $a\in \interval{1,2n}$, we will further use the notation 
\begin{align}
	\ell_\chi(a) = |\{ \text{chord }B\text{ of }\chi : B\text{ crosses }\{a,\chi(a)\}\}|
\end{align}
for the number of chords crossing the chord $\{a, \chi(a)\}$.
Note that we then have the relations
\begin{align}
	d_\chi(a \pm \tfrac12, \chi(a) \mp \tfrac12) = \ell_\chi(a), \quad d_\chi(a \pm \tfrac12, \chi(a) \pm \tfrac12) = \ell_\chi(a) +1, \label{eq:parallelchord}
\end{align}
where we implicitly identify $2n + \tfrac12$ and $\tfrac12$.

\begin{proposition}\label{prop:mapchord}
	For each map $\map \in \maps$ there exists a unique chord diagram $\mathsf{Geod}(\map) \in \chords$ such that $d_\map = d_{\mathsf{Geod}(\map)}$. Moreover, $\mathsf{Geod} : \maps \to \chords$ is surjective. 
\end{proposition}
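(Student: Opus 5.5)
The plan is to recover a chord diagram from any boundary pseudometric through its discrete ``second differences''. Uniqueness, and the identity $d_\map=d_\chi$, will both follow from the fact that a pseudometric on $\interval{\tfrac12,2n-\tfrac12}$ is determined by these second differences together with its values on adjacent points. For integers $i\neq j$ in $\interval{1,2n}$, working cyclically, set
\begin{align*}
c_d(i,j) = \tfrac12\Big(d(i-\tfrac12,j-\tfrac12)+d(i+\tfrac12,j+\tfrac12)-d(i-\tfrac12,j+\tfrac12)-d(i+\tfrac12,j-\tfrac12)\Big).
\end{align*}
For $d=d_\chi$, a chord contributes to this combination only if it separates the four points differently in the interleaved and the nested pairs. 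This happens only for the chord with endpoints exactly $i$ and $j$, which crosses both $\{i-\tfrac12,j-\tfrac12\}$ and $\{i+\tfrac12,j+\tfrac12\}$ but neither nested pair. Hence $c_{d_\chi}(i,j)=\ind_{\chi(i)=j}$, and $\chi$ is determined by $d_\chi$.

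\textbf{Existence.} Fix $\map\in\maps_{2n}$ and write $d=d_\map$.
\begin{enumerate}[(i)]
\item \emph{Integrality.} Because $\map$ is bipartite, the four distances sum to an even number, so $c_d(i,j)\in\Z$.
\item \emph{Nonnegativity.} The pairs $\{i-\tfrac12,j-\tfrac12\}$ and $\{i+\tfrac12,j+\tfrac12\}$ interleave on the outer face. By planarity, geodesics joining them (drawn inside the closed disk bounded by the outer face) must share a vertex $v$. Splitting at $v$ gives $d(i-\tfrac12,j-\tfrac12)+d(i+\tfrac12,j+\tfrac12)\geq d(i-\tfrac12,j+\tfrac12)+d(i+\tfrac12,j-\tfrac12)$, so $c_d\geq 0$.
\item \emph{Bound $c_d\le 1$.} Corners $a$ and $a+1$ sit at adjacent vertices, so in a bipartite map $f_i(b):=d(i-\tfrac12,b)-d(i+\tfrac12,b)\in\{\pm1\}$ for every $b$. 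Since $c_d(i,j)=\tfrac12(f_i(j-\tfrac12)-f_i(j+\tfrac12))$, we get $c_d(i,j)\in\{0,1\}$.
\item \emph{Exactly one partner.} Summing over $j$ running from $i+1$ cyclically to $i-1$ telescopes to $\tfrac12(f_i(i+\tfrac12)-f_i(i-\tfrac12))=\tfrac12(1-(-1))=1$. So each $i$ has exactly one $j$ with $c_d(i,j)=1$.
\end{enumerate}
Since $c_d$ is symmetric, this defines a fixed-point-free involution $\chi$, and by construction $c_{d_\chi}=c_{d}$.

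To conclude $d_\map=d_\chi$, I would induct on the cyclic gap $b-a$. The definition of $c$ expresses $d(i-\tfrac12,j+\tfrac12)$ in terms of $c(i,j)$ and three values at strictly smaller gap (with a suitable choice of which arc is ``inner''). The base cases are $d(a,a)=0$ and $d(a,a+1)=1$, and these hold for both pseudometrics: for $d_\chi$, the chord at the integer between $a$ and $a+1$ is the unique chord separating them. I expect the main obstacle to be the planarity step (ii). There I must be careful that the geodesics, although paths in the bulk, can be viewed inside the disk bounded by the outer face walk even when boundary vertices repeat. The Jordan-type crossing argument has to be phrased for corners rather than vertices, for instance by cutting the outer face open into a proper disk.

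\textbf{Surjectivity.} Given $\chi\in\chords_{2n}$ with $n\ge1$, draw its chords as straight segments between points in convex position on a circle, perturbed so that no three chords are concurrent. Let $\map$ be the dual of this arrangement: its vertices are the regions, and there is one edge for each chord segment separating two regions. The inner faces correspond to crossing points and have degree $4$, while the outer face has degree $2n$ and its corners correspond to the boundary arcs, labeled $\interval{\tfrac12,2n-\tfrac12}$. So $\map$ is bipartite of perimeter $2n$. Any path between two regions must cross every chord separating them, giving $d_\map\ge d_\chi$. The straight segment between points on the two boundary arcs crosses exactly the chords interleaving with them, each once, which gives equality. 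Hence $\mathsf{Geod}(\map)=\chi$, and the empty diagram arises from the single-vertex map.
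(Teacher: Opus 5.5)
Your proof is correct, but for existence it takes a genuinely different route from the paper.

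\textbf{Existence.} The paper defines $\mathsf{Geod}(\map)(a)$ geometrically. It colours the vertices according to which endpoint of $e_a$ is closer, uses that both Voronoi clusters are connected, and lets $b$ be the outer edge where the interface between the clusters exits. Planarity enters only through that interface. The involution property then comes from rewriting the exit condition as the symmetric relation \eqref{eq:geodcondition}, and $d_\map=d_\chi$ is checked on the first differences $d(i,a+\tfrac12)-d(i,a-\tfrac12)$.

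You work purely with the boundary pseudometric instead. Note that \eqref{eq:geodcondition} is exactly your condition $c_d(a,b)=1$. Your step (ii), the four-point Monge inequality for geodesics joining interleaved corners, is the algebraic counterpart of the paper's planarity claim. It says that $f_a$ is monotone around the boundary, i.e.\ that the corners closer to $a-\tfrac12$ form a single arc.

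\textbf{What each approach buys.} In your version, the partner's existence and uniqueness and the involution property come out at once from the $\{0,1\}$ bound and the telescoping identity. Your argument also isolates exactly which properties of $d_\map$ are used: adjacent corners at distance one, bipartite parity, and the Monge inequality. So it essentially characterizes the pseudometrics of the form $d_\chi$. The topological input is reduced to one standard lemma, that paths joining interleaved corners must meet. You rightly flag that this lemma needs care when the outer face is not a simple cycle; the paper glosses over the same point. The paper's version instead gives the geometric picture of a chord as a Voronoi interface, and the characterization \eqref{eq:geodcondition} it produces is reused later, e.g.\ in Lemma~\ref{lem:chorddist}.

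\textbf{Surjectivity.} The paper allows any transversal drawing and only verifies \eqref{eq:geodcondition}, using the face path running alongside each chord. You use a straight-line convex drawing to prove the stronger identity $d_\map=d_\chi$ directly. For that you should choose the two points on the boundary arcs generically, so that the connecting segment avoids the crossing points of the arrangement.
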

\begin{proof}
	Let $n\geq 1$ and $\map \in \maps_{2n}$. 
	Denote by $e_1, \ldots, e_{2n}$ the edges on the outer face of $\map$ in counterclockwise order starting from the root corner (like the labeling in Figure~\ref{fig:mapandchord}).
	We define $\mathsf{Geod}(\map)$ explicitly as an involution on $\interval{1,2n}$ as follows.
	For $a\in\interval{1,2n}$, let us color each vertex of $\map$ depending on which endpoint of $e_a$ is closest to it in the graph distance metric, observing that no ties arise because $\map$ is bipartite.
	Concretely, we color a vertex white if it is closer to the corner $a-\tfrac12$ and black if it is closer to the corner $a+\tfrac12$ (see Figure~\ref{fig:voronoi}).
	Since a shortest path between a white vertex and the corner $a-\tfrac12$ necessarily encounters only white vertices, the white cluster is connected.
	The same is true for the black cluster.
	It follows from planarity of $\map$ that the interface between the white and black cluster, starting at $e_a$, ends at a unique bicolored edge $e_b$ on the outer face with $b\neq a$.
	We then set $\mathsf{Geod}(\map)(a) = b$.

	\begin{figure}[h]
		\centering
		\includegraphics[width=.5\linewidth]{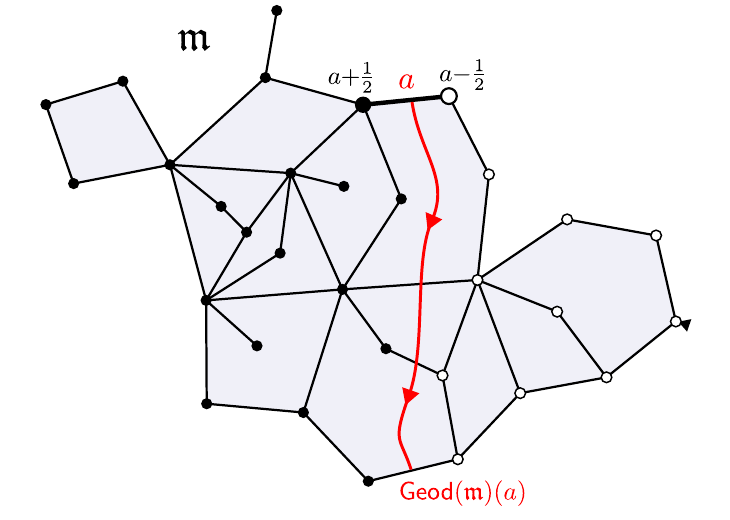}
		\caption{The Voronoi diagram of the corners $a \pm \tfrac12$ adjacent to $a$ singles out a unique ``parallel'' edge labelled $b=\mathsf{Geod}(\map)(a)$.}\label{fig:voronoi}
	\end{figure}

	This defines a fixed-point-free mapping $\mathsf{Geod}(\map) : \interval{1,2n} \to \interval{1,2n}$.
	To see that it is an involution, note that $\mathsf{Geod}(\map)(a) = b$ by construction is equivalent to
	\begin{align}
		d_\map(a-\tfrac12,b+\tfrac12) < d_\map(a+\tfrac12,b+\tfrac12)\quad\text{and}\quad d_\map(a+\tfrac12,b-\tfrac12) < d_\map(a-\tfrac12,b-\tfrac12).\label{eq:geodcondition}
	\end{align}
	By triangle inequalities this in turn is equivalent to 
	\begin{align*}
		d_\map(a-\tfrac12,b+\tfrac12) = d_\map(a+\tfrac12,b-\tfrac12) = d_\map(a+\tfrac12,b+\tfrac12)-1 = d_\map(a-\tfrac12,b-\tfrac12)-1.
	\end{align*}
	This condition is symmetric in $a$ and $b$, so is also equivalent to $\mathsf{Geod}(\map)(b) = a$.
	We conclude that $\mathsf{Geod}(\map) \in \mathcal{C}_{2n}$.

	Let us now check that $d_{\chi} = d_{\map}$ when $\chi = \mathsf{Geod}(\map)$. 
	Since both are pseudometrics, it suffices to check that their increments agree, i.e.\ that for all $i\in \interval{\tfrac12,2n-\tfrac12}$ and $a\in \interval{1,2n}$ we have
	\begin{align*}
		d_{\chi}(i,a+\tfrac12) - d_{\chi}(i,a-\tfrac12) = d_{\map}(i,a+\tfrac12) - d_{\map}(i,a-\tfrac12).
	\end{align*}
	The right-hand side is $1$, respectively $-1$, if corner $i$ is closer to corner $a-\tfrac12$, respectively $a+\tfrac12$.
	In the first case the chord of $\mathsf{Geod}(\map)$ starting at $a$ crosses $\{i,a+\tfrac12\}$ and does not cross $\{i,a-\tfrac12\}$, while in the second case it is the other way around.
	This matches precisely the left-hand side, showing that $d_{\chi} = d_{\map}$.
	Since a chord diagram is uniquely characterized by its pseudometric $d_{\chi}$, this establishes the claimed existence and uniqueness of the geodesic chord diagram.

	Finally, let us show that for each $\chi \in \mathcal{C}_{2n}$ there exists a map $\map\in\maps_{2n}$ such that $\mathsf{Geod}(\map) = \chi$.
	For this let us fix any drawing $\tilde{\chi}$ of the chord diagram $\chi$ in the unit disk such that pairs of chords intersect at most once and do so transversally, e.g.\ as in Figure~\ref{fig:mapandchord}.
	We interpret $\tilde{\chi}$ as a planar map by putting a vertex at each crossing and each endpoint of a chord and letting segments of chords or of the boundary circle be the edges of $\tilde{\chi}$ (Figure~\ref{fig:dualmap}).
	By construction $\tilde{\chi}$ has perimeter $2n$ and all inner vertices (i.e.\ the vertices corresponding to crossings) are of even degree.
	Then one can consider the \emph{dual map} $\tilde{\chi}^\dagger$, obtained by putting a new vertex in each inner face of $\tilde{\chi}$ and drawing for each inner edge $e$ of $\tilde{\chi}$ a dual edge of $\tilde{\chi}^\dagger$ connecting the vertices in the faces adjacent to $e$.
	Then $\tilde{\chi}^\dagger$ has perimeter $2n$ as well and all inner faces are of even degree.
	Hence $\tilde{\chi}^{\dagger} \in \maps_{2n}$.  

	\begin{figure}[h]
		\centering
		\includegraphics[width=.75\linewidth]{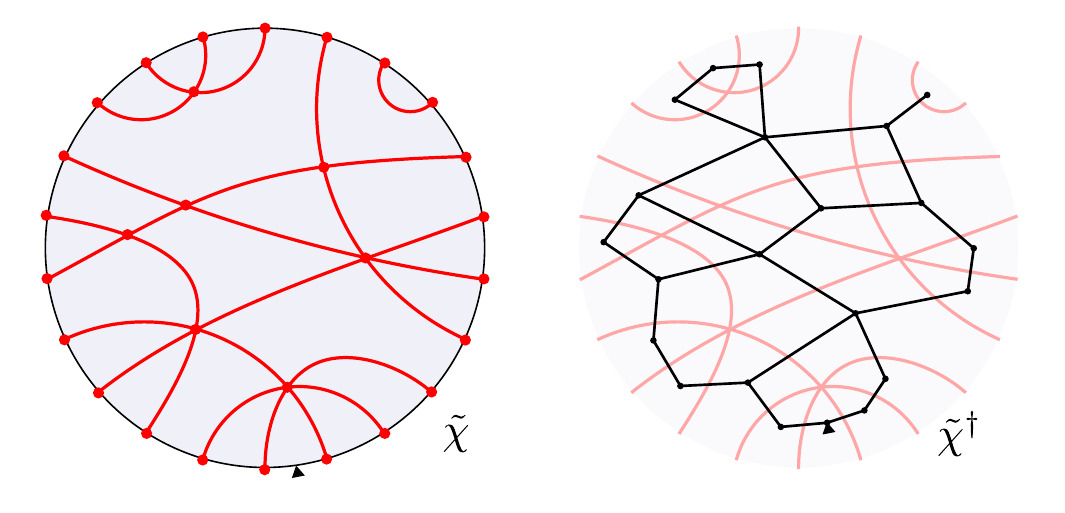}
		\caption{A drawing $\tilde{\chi}$ of the chord diagram $\chi\in\mathcal{C}$ of Figure~\ref{fig:mapandchord} together with its dual map $\tilde{\chi}^\dagger \in \maps$.}\label{fig:dualmap}
	\end{figure}

	It remains to show that for $\map = \tilde{\chi}^{\dagger}$ we have $\mathsf{Geod}(\map) = \chi$. 
	It is easy to see that for each $i,j\in\interval{\tfrac12,2n-\tfrac12}$ we have the inequality $d_{\map}(i,j) \geq d_\chi(i,j)$.
	Indeed, any vertex path in $\map$ from corner $i$ to corner $j$ corresponds to a face path in $\tilde{\chi}$ starting adjacent to $i$ and ending adjacent to $j$. 
	Each step of such a path crosses at most one chord and there are $d_\chi(i,j)$ chords to cross.
	On the other hand, for $a\in\interval{1,2n}$, consider the chord $\{a,b=\chi(a)\}$ which is crossed $\ell_\chi(a)$ times.
	We can find a path of length $\ell_\chi(a)$ in $\map$ from $a+\tfrac12$ to $b-\tfrac12$ by traveling along the faces of $\map$ that are crossed by the chord.
	Therefore
	\begin{align*}
		d_\map(a+\tfrac12,b-\tfrac12) \leq \ell_\chi(a) \stackrel{\eqref{eq:parallelchord}}{<} d_\chi(a+\tfrac12,b+\tfrac12) \leq d_\map(a-\tfrac12,b-\tfrac12).
	\end{align*}
	Analogously $d_\map(a-\tfrac12,b+\tfrac12) < d_\map(a+\tfrac12,b+\tfrac12)$.
	From \eqref{eq:geodcondition} we then conclude that $\mathsf{Geod}(\map)(a) = b = \chi(a)$.
	This proves the final statement.
\end{proof}

\subsection{Shortcutting chord diagrams}\label{sec:shortcut}

The main combinatorial construction at the level of chord diagrams that we need is that of introducing a \emph{shortcut}. For $n,p\geq 1$, let $\chi' \in \mathcal{C}_{2n+2p-2}$ and consider its pseudometric $d_{\chi'}$ on $\interval{\tfrac12,2n+2p-2-\tfrac12}$.
Then we can construct from it a pseudometric on $\interval{\tfrac12,2n-\tfrac12}$ by introducing a shortcut between $\tfrac12$ and $2n-\tfrac12$, meaning that we set 
\begin{align}
	d_{\chi}(a,b) = \min\Big[ d_{\chi'}(a,b), d_{\chi'}(a,\tfrac12) + d_{\chi'}(2n-\tfrac12,b) +1, d_{\chi'}(a,2n-\tfrac12) + d_{\chi'}(\tfrac12,b) +1 \Big]\label{eq:shortcutdef}
\end{align}
for $a,b\in\interval{\tfrac12,2n-\tfrac12}$.
Then this corresponds to a unique chord diagram $\chi \in \mathcal{C}_{2n}$ of size $2n$ that we denote by $\mathsf{Short}_{n,p}(\chi')=\chi$.
One way to see this is by observing that, according to Proposition~\ref{prop:mapchord}, there exists a bipartite planar map $\map'$ of perimeter $2n+2p-2$ so that $d_{\chi'} = d_{\map'}$.
From $\map'$ one can construct another bipartite planar map $\map$ of perimeter $2n$ by drawing an additional (``shortcut'') edge through the outer face of $\map'$ connecting $\tfrac12$ and $2n-\tfrac12$.
Then $d_{\map}$ is given exactly by the right-hand side of \eqref{eq:shortcutdef} and therefore corresponds to the pseudometric $d_\chi$ of $\chi = \mathsf{Geod}(\map) \in \mathcal{C}_{2n}$.

	\begin{figure}[h]
		\centering
		\includegraphics[width=.8\linewidth]{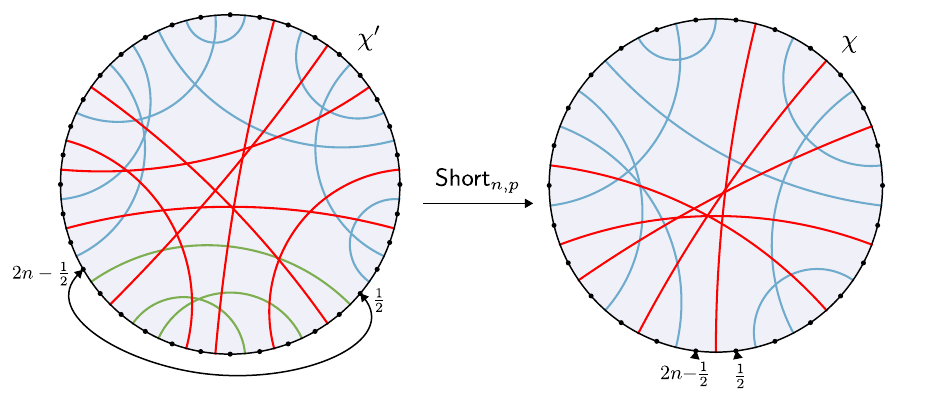}
		\caption{An example of a chord diagram $\chi' \in \mathcal{C}_{2n+2p-2}$ for $n=13$ and $p=5$ and the result $\chi = \mathsf{Short}_{n,p}(\chi')$ of introducing the indicated shortcut. The coloring of the chords will be explained in Figure~\ref{fig:shortcut-decomp} below.}\label{fig:shortcut}
	\end{figure}

We start by establishing several properties of $\mathsf{Short}_{n,p}$.

\begin{lemma}\label{lem:shortplanarity}
	Let $\chi = \mathsf{Short}_{n,p}(\chi')$ and $a,b\in \interval{\tfrac12,2n-\tfrac12}$ with $a < b$. If $d_\chi(a,b) = d_{\chi'}(a,b)$, then $d_\chi(a',b') = d_{\chi'}(a',b')$ for all $a',b' \in \interval{a,b}$.
\end{lemma}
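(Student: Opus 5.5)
We argue by contraposition: if some inner pair $a\le a'<b'\le b$ has $d_\chi(a',b')<d_{\chi'}(a',b')$, then also $d_\chi(a,b)<d_{\chi'}(a,b)$. The tool is the planar-map realization already used to define $\mathsf{Short}_{n,p}$. By Proposition~\ref{prop:mapchord} choose $\map'$ with $d_{\map'}=d_{\chi'}$. Then add the shortcut edge $e$ in the outer face, joining corners $\tfrac12$ and $2n-\tfrac12$, to get $\map$ with $d_\map=d_\chi$. In $\map$ the outer face is bounded by $e$ together with the boundary corners $\interval{\tfrac12,2n-\tfrac12}$. The corners $\interval{2n-\tfrac12,2n+2p-2-\tfrac12}$ of $\map'$ together with $e$ bound a new inner face of degree $2p$.

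First, a strict inequality $d_\chi(a',b')<d_{\chi'}(a',b')$ means that every geodesic from $a'$ to $b'$ in $\map$ uses $e$. By \eqref{eq:shortcutdef} we may fix one such geodesic $\gamma$. Say it goes from $a'$ to $\tfrac12$, crosses $e$, and then runs from $2n-\tfrac12$ to $b'$; the other orientation is symmetric. Its length is $d_{\chi'}(a',\tfrac12)+1+d_{\chi'}(2n-\tfrac12,b')$.

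Second, I use a crossing argument for the pair $a,b$. Take any path $\delta$ in $\map'$ from $a$ to $b$. The curve $\gamma$, closed up through the outer face, separates the disk. Since $a\le a'<b'\le b$ with all labels in $\interval{\tfrac12,2n-\tfrac12}$, the boundary arc $[a',b']$ of the outer face of $\map$ lies on the side of $\gamma$ away from $e$, while $a,b$ lie on the arcs $[\tfrac12,a']$ and $[b',2n-\tfrac12]$. The key planar fact is that any $a$--$b$ path $\delta$ in $\map'$ avoiding $e$ must meet both subpaths of $\gamma$: the one from $a'$ to $\tfrac12$ and the one from $2n-\tfrac12$ to $b'$. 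I would prove this by a Jordan-curve argument. The path $\gamma$ together with the outer arc of $\map$ through $e$ separates $a$'s side from the region between, and similarly for $b$. Equivalently, I would argue the two ``halves'' of $\gamma$ each separate.

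Third, I exchange segments. Let $\delta$ hit the first half of $\gamma$ at vertex $u$ and the second half at vertex $v$. Geodesicity of $\gamma$ in $\map$ gives $d(a',u)+d(u,\tfrac12)=d(a',\tfrac12)$ and the analogous identity at $v$. Now build the path $a\to u$ (along $\delta$), then $u\to\tfrac12\to e\to 2n-\tfrac12\to v$ (along $\gamma$), then $v\to b$ (along $\delta$). Its length is at most $|\delta|-d_{\map'}(u,v)+d(u,\tfrac12)+1+d(2n-\tfrac12,v)$. Because $d_{\map}(u,v)=d(u,\tfrac12)+1+d(2n-\tfrac12,v)<d_{\map'}(u,v)$, a subpath of the strict geodesic $\gamma$ is still strictly shorter than any $\map'$ route. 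This gives a path from $a$ to $b$ through $e$ that is strictly shorter than $\delta$. Taking $\delta$ to be a $\map'$-geodesic yields $d_\chi(a,b)<d_{\chi'}(a,b)$, which proves the contrapositive.

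The main obstacle is the second step: making the topological separation claim rigorous and handling degenerate cases where $a=a'$ or $b=b'$, or where $\delta$ touches $\gamma$ only at endpoints. One could also try a purely metric proof from \eqref{eq:shortcutdef} and the crossing description of $d_{\chi'}$: a chord crossing $\{a',\tfrac12\}$ and $\{2n-\tfrac12,b'\}$ must cross $\{a,b\}$ appropriately. I would first try the map argument and fall back on the chord counting if needed.
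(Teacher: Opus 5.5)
Your argument is correct and is essentially the paper's planar cut-and-paste argument, run in contrapositive with the roles of the two geodesics swapped. The paper fixes an $e$-avoiding $a$--$b$ geodesic, which separates $\interval{a,b}$ from $e$, and reroutes an $a'$--$b'$ geodesic along it between its first and last intersection points; that version needs only equality of segment lengths, not your strict inequality.

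Three details should be tightened:
\begin{itemize}
\item \emph{Order of $u$ and $v$.} Your length bound requires $u$ to come before $v$ along $\delta$. Take $v$ to be the first vertex of $\delta$ on the half of $\gamma$ from $2n-\tfrac12$ to $b'$. The part of $\delta$ up to $v$, continued along $\gamma$ to $b'$, joins corners $a$ and $b'$, which interleave with $\tfrac12,a'$ unless $a\in\{\tfrac12,a'\}$ (a trivial case). So that initial part of $\delta$ already meets the other half of $\gamma$.
\item \emph{Strictness for subpaths.} The inequality $d_\map(u,v)<d_{\map'}(u,v)$ needs its one-line justification. A $\map'$-path of length $d_\map(u,v)$ spliced into $\gamma$ would give $d_{\map'}(a',b')=d_\map(a',b')$, a contradiction.
\item \emph{The ``other orientation''.} It is not symmetric but impossible. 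Its two halves would join interleaved corner pairs and hence meet, contradicting simplicity of the geodesic $\gamma$.
\end{itemize}
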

\begin{proof}
	We address this at the level of planar maps, letting $\map$ and $\map'$ be as above. 
	If $d_\chi(a,b) = d_{\chi'}(a,b)$, then there exists a shortest path $\gamma$ between corners $a$ and $b$ in $\map$ that does not pass through the shortcut edge (namely a shortest path in $\map'$).
	Let now $\gamma'$ be a shortest path between $a'$ and $b'$.
	Since $\gamma$ separates $a'$ and $b'$ from the shortcut edge, $\gamma'$ cannot pass through the shortcut edge if it does not cross $\gamma$.
	If $\gamma'$ does cross $\gamma$, then it will have a first and last intersection with $\gamma$, say at vertex $v_1$ and $v_2$ respectively.
	Since $\gamma$ and $\gamma'$ are both shortest, their segments between $v_1$ and $v_2$ are of the same length.
	Therefore replacing this segment of $\gamma'$ by that of $\gamma$ gives another shortest path between $a'$ and $b'$, which does not pass through the shortcut edge.
	In either case, it is a path in $\map'$ as well, so we conclude that $d_\chi(a',b') = d_{\chi'}(a',b')$.
\end{proof}

\begin{lemma}\label{lem:chorddist}
	Let $\chi = \mathsf{Short}_{n,p}(\chi')$. 
	Then $d_\chi(a',b') = d_{\chi'}(a',b')$ for all $a',b' \in \interval{\tfrac12,\chi(2n)-\tfrac12}$ and for all $a',b' \in \interval{\chi(2n)+\tfrac12,2n-\tfrac12}$.
	Moreover, for each chord $\{u < v\}$ of $\chi$, we have that $d_\chi(a',b') = d_{\chi'}(a',b')$ for all $a',b' \in \interval{u+\tfrac12,v-\tfrac12}$.
\end{lemma}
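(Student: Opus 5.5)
By Lemma~\ref{lem:shortplanarity}, each claim reduces to a single equality $d_\chi(a,b)=d_{\chi'}(a,b)$ at the extreme endpoints of the interval in question. Throughout I use the planar maps $\map'$ and $\map$ from above, where $\map$ is $\map'$ with a shortcut edge $s$ joining corners $\tfrac12$ and $2n-\tfrac12$. In particular $d_\chi(\tfrac12,2n-\tfrac12)=1$, since these two corners are distinct vertices joined by $s$.

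\textbf{First claim.} Write $c=\chi(2n)$. Apply \eqref{eq:parallelchord} to the chord $\{2n,c\}$ of $\chi$, identifying $2n+\tfrac12$ with $\tfrac12$. This gives
\[
d_\chi(\tfrac12,c-\tfrac12)=\ell_\chi(2n)=d_\chi(2n-\tfrac12,c+\tfrac12), \qquad d_\chi(2n-\tfrac12,c-\tfrac12)=d_\chi(\tfrac12,c+\tfrac12)=\ell_\chi(2n)+1.
\]
It therefore suffices to show $d_\chi(\tfrac12,c-\tfrac12)=d_{\chi'}(\tfrac12,c-\tfrac12)$; Lemma~\ref{lem:shortplanarity} then gives the statement on all of $\interval{\tfrac12,c-\tfrac12}$.

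Suppose instead $d_\chi(\tfrac12,c-\tfrac12)<d_{\chi'}(\tfrac12,c-\tfrac12)$. Then every geodesic in $\map$ from corner $\tfrac12$ to corner $c-\tfrac12$ uses $s$. A geodesic starting at $\tfrac12$ can only use $s$ as its first step, so $d_\chi(\tfrac12,c-\tfrac12)=1+d_\chi(2n-\tfrac12,c-\tfrac12)$. This contradicts $d_\chi(\tfrac12,c-\tfrac12)<d_\chi(2n-\tfrac12,c-\tfrac12)$ from the display. The interval $\interval{c+\tfrac12,2n-\tfrac12}$ is handled symmetrically, using $d_\chi(2n-\tfrac12,c+\tfrac12)<d_\chi(\tfrac12,c+\tfrac12)$.

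\textbf{Second claim.} Let $\{u<v\}$ be a chord of $\chi$. If $u+\tfrac12>v-\tfrac12$ there is nothing to prove. Otherwise it suffices to show
\[
d_\chi(u+\tfrac12,v-\tfrac12)=d_{\chi'}(u+\tfrac12,v-\tfrac12)
\]
and apply Lemma~\ref{lem:shortplanarity}. By \eqref{eq:parallelchord}, $d_\chi(u+\tfrac12,v-\tfrac12)=\ell_\chi(u)$ and $d_\chi(u-\tfrac12,v+\tfrac12)=\ell_\chi(u)$. Note that $\interval{u+\tfrac12,v-\tfrac12}$ does not contain both $\tfrac12$ and $2n-\tfrac12$, because $u\ge1$ forces $u+\tfrac12\ge 1\tfrac12$.

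Suppose a geodesic $\gamma$ in $\map$ from $u+\tfrac12$ to $v-\tfrac12$ must use $s$. Then $\gamma$ passes through both $\tfrac12$ and $2n-\tfrac12$, which lie outside the arc $[u+\tfrac12,v-\tfrac12]$ (on the arc $[v+\tfrac12,u-\tfrac12]$ cyclically, possibly coinciding with its endpoints).

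\textbf{Main obstacle.} The difficulty is deriving a contradiction from this. The plan is to use the Voronoi description from the proof of Proposition~\ref{prop:mapchord}. For the edge $e_u$, colour each vertex white or black according to whether it is closer to $u-\tfrac12$ or to $u+\tfrac12$. The interface between the two clusters runs from $e_u$ to $e_v$, and the vertices $u+\tfrac12$ and $v-\tfrac12$ are black.

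Since $\gamma$ is a geodesic, every vertex $w$ on it satisfies $d(u+\tfrac12,w)+d(w,v-\tfrac12)=\ell_\chi(u)$. I would then show that the outer-face vertices $\tfrac12$ and $2n-\tfrac12$ on $\gamma$ lie on the white side. Then $\gamma$ would cross the interface twice, contradicting the connectedness and planarity of the black cluster.

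If that argument resists, I would try a more direct metric route instead. The goal would be to show $\gamma$ yields a path from $u-\tfrac12$ to $v+\tfrac12$ of length at most $\ell_\chi(u)-2$, or to use the increment characterisation $d_\chi(i,u+\tfrac12)-d_\chi(i,u-\tfrac12)=\pm1$ at $i=\tfrac12$ and $i=2n-\tfrac12$.
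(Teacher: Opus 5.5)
Your reduction via Lemma~\ref{lem:shortplanarity} and your proof of the first claim are correct and agree with the paper. The gap is in the second claim. Its only nontrivial step is labelled the ``main obstacle'' and left as a plan, and the mechanism you sketch would not work as stated. A geodesic $\gamma$ between the black vertices $u+\tfrac12$ and $v-\tfrac12$ that passes through white vertices contradicts neither the connectedness of the black cluster nor planarity. The clusters are defined by the colouring, not by $\gamma$, and a path may leave a region and come back.

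The missing idea is metric. By \eqref{eq:parallelchord}, $d_\chi(u-\tfrac12,v-\tfrac12)=\ell_\chi(u)+1$. Suppose some vertex $w$ on $\gamma$ were white, i.e.\ $d_\map(u-\tfrac12,w)=d_\map(u+\tfrac12,w)-1$. Then $d_\map(u-\tfrac12,v-\tfrac12)\le d_\map(u+\tfrac12,w)-1+d_\map(w,v-\tfrac12)=\ell_\chi(u)-1$, which is absurd. Hence every geodesic from $u+\tfrac12$ to $v-\tfrac12$ is entirely black. You also only promise that the shortcut endpoints are white. This follows from the increment rule: $\{u,v\}$ crosses $\{\tfrac12,u+\tfrac12\}$ but not $\{\tfrac12,u-\tfrac12\}$, so $d_\chi(\tfrac12,u+\tfrac12)=d_\chi(\tfrac12,u-\tfrac12)+1$.

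Moreover, your assertion that $\tfrac12$ and $2n-\tfrac12$ both lie outside $\interval{u+\tfrac12,v-\tfrac12}$ fails for $v=2n$, where $2n-\tfrac12=v-\tfrac12$ is black. There are two ways to fix this. You can use only the vertex at corner $\tfrac12$, which is white for every chord $\{u<v\}$ and is visited by any path through $s$. Alternatively, dispatch $v=2n$ with the first claim, as the paper does by assuming $u<v<2n$.

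With these repairs your route is complete, and it differs from the paper's. The paper takes shortest paths $\gamma_-$ from $u-\tfrac12$ to $v+\tfrac12$ and $\gamma_+$ from $u+\tfrac12$ to $v-\tfrac12$. It shows by segment swapping, using the same two identities of \eqref{eq:parallelchord}, that they are vertex-disjoint. It then uses planarity to see that only $\gamma_-$ can contain the shortcut edge. Your repaired argument is purely metric and avoids planarity at this step.
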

\begin{proof}
	Recall that $\chi(u) = v$ if and only if
	\begin{align*}
		d_\chi(u-\tfrac12,v+\tfrac12) = d_\chi(u+\tfrac12,v-\tfrac12) = d_\chi(u+\tfrac12,v+\tfrac12)-1 = d_\chi(u-\tfrac12,v-\tfrac12)-1.
	\end{align*}
	If $u=2n$, then comparing this to \eqref{eq:shortcutdef} implies that $d_{\chi'}(\tfrac{1}{2},\chi(2n)-\tfrac12) = d_{\chi}(\tfrac{1}{2},\chi(2n)-\tfrac12)$ and $d_{\chi'}(\chi(2n)+\tfrac12,2n-\tfrac12) = d_{\chi}(\chi(2n)+\tfrac12,2n-\tfrac12)$.
	The first claim now follows directly from Lemma~\ref{lem:shortplanarity}.

	So let us assume for the second statement that $u<v<2n$.
	Let $\map$ and $\map'$ be planar maps as before. 
	Let $\gamma_-$ be a shortest path in $\map$ from corner $u-\tfrac12$ to corner $v+\tfrac12$, and $\gamma_+$ a path of equal length from corner $u+\tfrac12$ to corner $v-\tfrac12$.
	These are necessarily disjoint, because if they would meet at a vertex $v$ one could swap segments of the curves to produce a shorter path from $u+\tfrac12$ to $v+\tfrac12$ or from $u-\tfrac12$ to $v-\tfrac12$, in contradiction with the last two equalities in the equation above.
	This means that at most one of $\gamma_-$, $\gamma_+$ can pass through the shortcut edge.
	By planarity this can only be $\gamma_-$, so $d_{\chi}(u+\tfrac12,v-\tfrac12) = d_{\chi'}(u+\tfrac12,v-\tfrac12)$.
	The second claim follows from Lemma~\ref{lem:shortplanarity} again.
\end{proof}

These two lemmas motivate the following classification of chords of a chord diagram $\chi \in \mathcal{C}_{2n}$.
We introduce a partial order on the chords of $\chi$ as follows: for chords $A$ and $B$, we let $A < B$ if $A$ and $B$ do not cross and either $A = \{2n,\chi(2n)\}$ or $A$ separates $B$ from the side $2n$ (i.e.\ $A = \{a ,a'\}$ and $B = \{b, b'\}$ for some $a < b < b' < a' < 2n$).
We thus obtain a partition of the chords into the \emph{minimal} and \emph{nonminimal} ones with respect to this partial order (see the left illustration in Figure~\ref{fig:shortcut-decomp} for an example).

\begin{lemma}\label{lem:nonminimalchords}
	Let $\chi = \mathsf{Short}_{n,p}(\chi')$. Each nonminimal chord of $\chi$ is a chord of $\chi'$ as well.
\end{lemma}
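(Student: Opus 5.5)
Let $B=\{u<v\}$ be a nonminimal chord of $\chi$. By definition of the partial order there is a chord $A$ of $\chi$ with $A<B$, i.e.\ $A$ does not cross $B$ and either $A=\{2n,\chi(2n)\}$ or $A=\{a,a'\}$ with $a<u<v<a'<2n$. The plan is to show that the whole ``neighbourhood'' of $B$, namely the corners $u\pm\tfrac12$ and $v\pm\tfrac12$, lies in an interval on which Lemma~\ref{lem:chorddist} says $d_\chi$ and $d_{\chi'}$ coincide. Once that holds, we can use the characterization of chords by distances. By \eqref{eq:geodcondition} applied to $\chi$ (which satisfies $d_\chi=d_{\map}$ for a map realizing it), $\chi(u)=v$ holds if and only if
\begin{align*}
d_\chi(u-\tfrac12,v+\tfrac12) = d_\chi(u+\tfrac12,v-\tfrac12) = d_\chi(u+\tfrac12,v+\tfrac12)-1 = d_\chi(u-\tfrac12,v-\tfrac12)-1.
\end{align*}
The same characterization holds for $\chi'$ with the same labels, since positions $\tfrac12,\dots,2n-\tfrac12$ are common to both diagrams and $\chi'$ has size $2n+2p-2\ge 2n$. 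If all four distances agree between $\chi$ and $\chi'$, we get $\chi'(u)=v$, so $B$ is a chord of $\chi'$.

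The argument splits into two cases according to $A$.

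\emph{Case 1: $A=\{2n,\chi(2n)\}$.} Since $B\neq A$ and $B$ does not cross $A$, either $u<v<\chi(2n)$ or $\chi(2n)<u<v<2n$. In the first case all four corners $u\pm\tfrac12, v\pm\tfrac12$ lie in $\interval{\tfrac12,\chi(2n)-\tfrac12}$. Here $u\ge1$ gives $u-\tfrac12\ge\tfrac12$, and $v\le\chi(2n)-1$ gives $v+\tfrac12\le \chi(2n)-\tfrac12$. In the second case they all lie in $\interval{\chi(2n)+\tfrac12,2n-\tfrac12}$. In either case the first statement of Lemma~\ref{lem:chorddist} gives equality of the four distances.

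\emph{Case 2: $A=\{a,a'\}$ with $a<u<v<a'<2n$.} Then $u-\tfrac12\ge a+\tfrac12$ and $v+\tfrac12\le a'-\tfrac12$, so all four corners lie in $\interval{a+\tfrac12,a'-\tfrac12}$. The second statement of Lemma~\ref{lem:chorddist}, applied to the chord $A$ of $\chi$, then gives the needed equalities.

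No step looks like a real obstacle; the main care needed is the endpoint bookkeeping. In particular, the half-integer corners adjacent to $B$ must stay strictly inside the intervals of Lemma~\ref{lem:chorddist}, which uses that $B$ shares no endpoint with $A$. We must also check that the label $2n-\tfrac12$ is never identified cyclically with $\tfrac12$ in $\chi'$. This is guaranteed because $v<2n$ in every case, so $v+\tfrac12\le 2n-\tfrac12$.
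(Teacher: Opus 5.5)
Your proof is correct and takes essentially the same approach as the paper: you place the four corners $u\pm\tfrac12$, $v\pm\tfrac12$ inside an interval where Lemma~\ref{lem:chorddist} forces $d_\chi=d_{\chi'}$, and then apply the four-distance characterization of chords to $\chi'$. The only cosmetic difference is that the paper splits cases on whether $B$ crosses $\{2n,\chi(2n)\}$, whereas you split on the type of the witness $A<B$; the two splits amount to the same argument.
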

\begin{proof}
	Let $\{u<v\}$ be a nonminimal chord of $\chi$. 
	If $\{u<v\}$ does not cross $\{2n,\chi(2n)\}$, then either $1 \leq u < v \leq \chi(2n)-1$ or $\chi(2n)+1 \leq u < v \leq 2n-1$.
	In both cases, the first statement of Lemma~\ref{lem:chorddist} implies that $d_{\chi}$ and $d_{\chi'}$ agree on the pairwise distances among $u-\tfrac12,u+\tfrac12,v-\tfrac12,v+\tfrac12$. 
	If $\{u < v \}$ crosses $\{2n,\chi(2n)\}$ then there must exist another chord $\{u' < v' \}$ of $\chi$ such that $u' < u <v < v'$.
	The second statement of Lemma~\ref{lem:chorddist} then leads to the same conclusion.
	Therefore, in each case
	\begin{align*}
		d_{\chi'}(u-\tfrac12,v+\tfrac12) = d_{\chi'}(u+\tfrac12,v-\tfrac12) = d_{\chi'}(u+\tfrac12,v+\tfrac12)-1 = d_{\chi'}(u-\tfrac12,v-\tfrac12)-1,
	\end{align*}
	so that $\{u,v\}$ is a chord of $\chi'$.
\end{proof}

The last lemma gives a necessary condition for $\chi'$ so that $\mathsf{Short}_{n,p}(\chi') = \chi$, but not yet a sufficient one.
This is achieved in the following proposition.

\begin{figure}
	\centering
	\includegraphics[width=\linewidth]{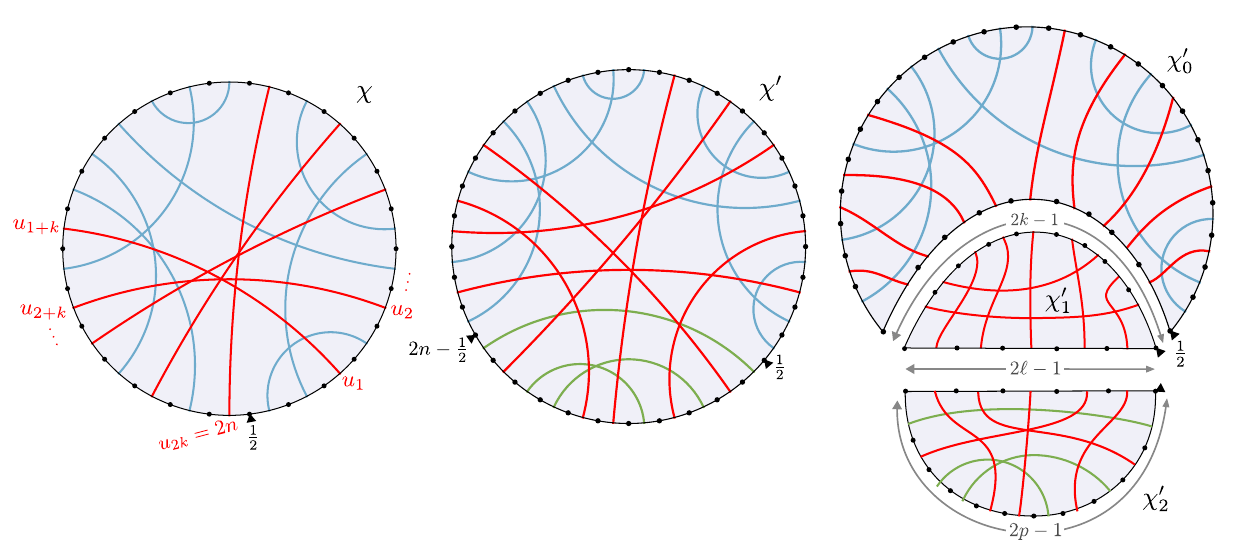}
	\caption{The same example as Figure~\ref{fig:shortcut}. The minimal chords of $\chi$ are shown in red, in this case there are $k=5$ of them, and the nonminimal ones in blue. The active chords of $\chi'$ are shown in red and the chords with both endpoints in $\interval{2n,2n+2p-2}$ in green. On the right the canonical decomposition of $\chi'$ is shown into $\chi_0'$ and $\chi_1'$ and a top part that is completely determined by $\chi$.\label{fig:shortcut-decomp}}
\end{figure}

\begin{proposition}\label{prop:shortsufficient}
	Let $n,p \geq 1$, $\chi \in \mathcal{C}_{2n}$ and $\chi' \in \chords_{2n+2p-2}$. Denote by $k\geq 1$ the number of minimal chords of $\chi$ and let $1\leq u_1 < \cdots < u_{2k} = 2n$ be their endpoints. Then $\mathsf{Short}_{n,p}(\chi') = \chi$ if and only if
	\begin{enumerate}[(i)]
		\item each nonminimal chord of $\chi$ is a chord of $\chi'$ as well;
		\item for each $i = 1, \ldots, k$ the chords of $\chi'$ starting at $u_{i},u_{i+1},\ldots,u_{i+k-1}$ are distinct.
	\end{enumerate}
\end{proposition}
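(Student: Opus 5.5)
The plan is to prove both directions by working with pseudometrics, using that a chord diagram is determined by its pseudometric. I will also use the local criterion \eqref{eq:geodcondition}: $\{u,v\}$ is a chord of $\chi$ if and only if the four distances among $u\pm\tfrac12, v\pm\tfrac12$ satisfy the pattern of equalities used in Lemma~\ref{lem:chorddist}. Write $\chi''=\mathsf{Short}_{n,p}(\chi')$. The first task is to describe the minimal chords of $\chi''$ directly in terms of $\chi'$. I will call a chord of $\chi'$ with an endpoint $a\in\interval{1,2n-1}$ \emph{active} if it either has its other endpoint in $\interval{2n,2n+2p-2}$ or is not separated from that arc by another chord of $\chi'$.

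\textbf{Necessity.} Assume $\chi''=\chi$. Then (i) is exactly Lemma~\ref{lem:nonminimalchords}. For (ii), suppose two of $u_i,\dots,u_{i+k-1}$, say $u_j<u_{j'}$, were joined by a chord of $\chi'$. The chords of $\chi$ meeting the arc $\interval{u_j,u_{j'}}$ are nested or crossing in a controlled way. Using Lemma~\ref{lem:chorddist} on the minimal chords with endpoints between $u_j$ and $u_{j'}$, I would compare $d_{\chi'}(u_j-\tfrac12,u_{j'}+\tfrac12)$ with $d_{\chi'}(u_j+\tfrac12,u_{j'}-\tfrac12)$, obtaining the first from $\chi$ by counting crossings with the minimal chords. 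The window containing at most $k$ consecutive minimal endpoints should mean that at most $k-1$ minimal chords separate $u_j$ from $u_{j'}$ on one side. Feeding this into the chord criterion for $\chi'$ should then give a contradiction: the shortcut term in \eqref{eq:shortcutdef} would have to beat the direct term at the wrong pair of corners. In other words, the window condition is precisely what makes the active chords of $\chi'$ at the $u$'s reconnect correctly through the shortcut.

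\textbf{Sufficiency.} Assume (i) and (ii). I will show that $d_{\chi''}$ and $d_\chi$ have the same increments, $d(i,a+\tfrac12)-d(i,a-\tfrac12)$, as in the proof of Proposition~\ref{prop:mapchord}. Equivalently, every chord of $\chi$ satisfies the chord criterion for $d_{\chi''}$. For nonminimal chords of $\chi$ this is straightforward. By (i) they are chords of $\chi'$, and the minimal chords of $\chi$, or rather the corresponding chords of $\chi'$, shield them from the shortcut. So the same argument as in Lemma~\ref{lem:shortplanarity} shows that $d_{\chi''}$ agrees with $d_{\chi'}$ on their four corners. For the minimal chords $\{u_i,u_{i+k}\}$ (the minimal chords pairwise cross, which forces this antipodal pairing), I would compute the three terms of \eqref{eq:shortcutdef} at the corners $u_i\pm\tfrac12$, $u_{i+k}\pm\tfrac12$ by counting chords of $\chi'$. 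Condition (ii) guarantees that the $k$ chords of $\chi'$ leaving $u_i,\dots,u_{i+k-1}$ are distinct, so each crosses the relevant slices to $\tfrac12$ or $2n-\tfrac12$ exactly once. This should make the minimum in \eqref{eq:shortcutdef} equal $\ell_\chi(u_i)$, respectively $\ell_\chi(u_i)+1$, as required by \eqref{eq:parallelchord}.

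\textbf{Main obstacle.} The crux is the distance bookkeeping for the minimal chords, in both directions. There one must show that the direct term and the two shortcut terms in \eqref{eq:shortcutdef} order themselves correctly exactly when (ii) holds. I would handle this first in the special case $k=1$, where (ii) is vacuous and only the chord $\{\chi(2n),2n\}$ is minimal. I would then treat general $k$ by induction on the number of chords of $\chi'$ with an endpoint in $\interval{2n,2n+2p-2}$, or alternatively by passing to a planar map $\map'$ with $d_{\map'}=d_{\chi'}$ and arguing with geodesics crossing the shortcut edge.
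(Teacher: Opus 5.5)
Your proposal has a genuine gap: the step that actually carries the proof is deferred to the ``Main obstacle'' paragraph, and the parts you present as routine are not justified. In the sufficiency direction, your shielding argument for nonminimal chords relies on ``the corresponding chords of $\chi'$'' of the minimal chords. But minimal chords of $\chi$ are in general not chords of $\chi'$: under (ii) the points $u_1,\dots,u_{2k-1}$ are typically matched in $\chi'$ to points of $\interval{2n,2n+2p-2}$, or to each other (e.g.\ $u_1$ with $u_{k+2}$ when $k\ge 3$). Moreover, Lemmas~\ref{lem:shortplanarity} and~\ref{lem:chorddist} concern $\mathsf{Short}_{n,p}(\chi')$, which is the unknown here. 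Lemma~\ref{lem:shortplanarity} only propagates an equality $d_{\mathsf{Short}_{n,p}(\chi')}(a,b)=d_{\chi'}(a,b)$ that you would first have to establish for an enclosing pair, and that needs (ii) plus exactly the counting you postpone. For a minimal chord $\{u_i,u_{i+k}\}$ with $i<k$, making the minimum in \eqref{eq:shortcutdef} equal $\ell_\chi(u_i)$ at $(u_i-\tfrac12,u_{i+k}+\tfrac12)$ requires two things. (a) You need a lower bound on the direct term over an arc containing $k+1$ of the $u$'s, where (ii) gives no distinctness. (b) You need the nonminimal chords to contribute additively along the shortcut route, i.e.\ no chord of $\chi$ may join $\interval{1,a-\tfrac12}$ to $\interval{b+\tfrac12,2n}$ when $\interval{a,b}$ encloses a minimal chord (such a chord would lie below that minimal chord in the partial order). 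Your sentence about the $k$ distinct chords concerns only active chords and addresses neither (a) nor (b).

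The missing idea is a decomposition that turns everything into counting. Given (i), every chord of $\chi'$ touching $\interval{1,2n-1}$ is either a nonminimal chord of $\chi$ or has an endpoint among $u_1,\dots,u_{2k-1}$. Chords inside $\interval{2n,2n+2p-2}$ are invisible. So on $\interval{\tfrac12,2n-\tfrac12}$ one has $d_\chi=d^{\mathrm{min}}_\chi+d^{\mathrm{non}}_\chi$ and $d_{\chi'}=d^{\mathrm{act}}_{\chi'}+d^{\mathrm{non}}_\chi$. Condition (ii) gives $d^{\mathrm{act}}_{\chi'}(a,b)=d^{\mathrm{min}}_\chi(a,b)$ whenever $\interval{a,b}$ contains at most $k$ of the $u$'s. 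Otherwise the triangle inequality through the corner $u_{i+k}-\tfrac12$ (with $u_i$ the first $u$ after $a$) gives $\ge$. Hence $d_{\chi'}\ge d_\chi$, with equality on short arcs, and \eqref{eq:shortcrit} follows for all $a<b$ in two cases. On short arcs the direct term equals $d_\chi(a,b)$, while the shortcut term is at least $d_\chi(a,\tfrac12)+1+d_\chi(2n-\tfrac12,b)\ge d_\chi(a,b)$. On long arcs, $\interval{\tfrac12,a}$ and $\interval{b,2n-\tfrac12}$ are both short, so by (b) and the fact that $\{u_k,2n\}$ crosses $\{a,b\}$ the shortcut term equals $d_\chi(a,b)$. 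This handles minimal and nonminimal chords at once, with no induction. The same decomposition gives necessity of (ii) in one line: $d_{\chi'}(u_i-\tfrac12,u_{i+k}-\tfrac12)\ge k+d^{\mathrm{non}}_\chi(u_i-\tfrac12,u_{i+k}-\tfrac12)$, while only $k$ active endpoints lie in between. Your own necessity sketch has the roles reversed: you cannot read off $d_{\chi'}(u_j-\tfrac12,u_{j'}+\tfrac12)$ from $\chi$, because equality at that outer pair is exactly what fails. It is repairable. Use only $d_\chi\le d_{\chi'}$ at the outer pair, and Lemma~\ref{lem:chorddist} at the inner pair $(u_j+\tfrac12,u_{j'}-\tfrac12)$, which lies under the minimal chord at $u_j$ (or in $\interval{u_k+\tfrac12,2n-\tfrac12}$ if $j>k$). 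This yields the contradiction $d_\chi(u_j+\tfrac12,u_{j'}-\tfrac12)+2\le d_\chi(u_j+\tfrac12,u_{j'}-\tfrac12)$.
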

\begin{proof}
Because of Lemma~\ref{lem:nonminimalchords} we may assume that (i) holds for our choice of $\chi$ and $\chi'$.
It is useful to decompose the pseudometric $d_\chi$ as 
\begin{align*}
	d_\chi(a,b) = d_\chi^{\mathrm{min}}(a,b) + d_\chi^{\mathrm{non}}(a,b) \quad \text{for }a,b\in \interval{\tfrac12,2n-\tfrac12},
\end{align*}
where $d_\chi^{\mathrm{min}}$ and $d_\chi^{\mathrm{non}}$ are defined just like \eqref{eq:chorddist} except that only minimal, respectively nonminimal, chords of $\chi$ are taken into account.
Let us call the chords of $\chi'$ that have at least one endpoint at $u_1, \ldots, u_{2k-1}$ the \emph{active} chords of $\chi'$ (the red chords in the middle illustration of Figure~\ref{fig:shortcut-decomp}).
Then we may similarly decompose
\begin{align*}
	d_{\chi'}(a,b) = d_{\chi'}^{\mathrm{act}}(a,b) + d_{\chi}^{\mathrm{non}}(a,b) \quad \text{for }a,b\in \interval{\tfrac12,2n-\tfrac12}
\end{align*}
where $d_{\chi'}^{\mathrm{act}}$ takes into account only active  chords of $\chi'$.
Note that the chords of $\chi'$ starting and ending in $\interval{2n,2n+2p-2}$ do not affect this distance (the green chords in Figure~\ref{fig:shortcut-decomp}).

Note that the minimal chords of $\chi$ each cross pairwise, so $\chi(u_j) = u_{j+k}$ for each $j=1,\ldots,k$.
Therefore
\begin{align}
	d_\chi^{\mathrm{min}}(u_i - \tfrac12,u_{i+k}-\tfrac12) = k\quad \text{for }i=1,\ldots,k.\label{eq:udist}
\end{align}
Recall that $\mathsf{Short}_{n,p}(\chi') = \chi$ is equivalent to
\begin{align}
	d_{\chi}(a,b) = \min\Big[ d_{\chi'}(a,b), d_{\chi'}(a,\tfrac12) + d_{\chi'}(2n-\tfrac12,b) +1 \Big] \quad \text{for all }a<b\in \interval{\tfrac12,2n-\tfrac12}.\label{eq:shortcrit}
\end{align}
We first check that this implies (ii).
From this assumption and \eqref{eq:udist}, it follows that 
\begin{align*}
	d_{\chi'}(u_i - \tfrac12,u_{i+k}-\tfrac12) \geq d_{\chi}(u_i - \tfrac12,u_{i+k}-\tfrac12) = k + d_\chi^{\mathrm{non}}(u_i - \tfrac12,u_{i+k}-\tfrac12).
\end{align*}
Since there are precisely $k$ endpoints of active chords of $\chi'$ in between $u_i - \tfrac12$ and $u_{i+k}-\tfrac12$, namely at $u_i, u_{i+1},\ldots,u_{i+k-1}$, they must correspond to distinct chords to ensure $d_{\chi'}^{\mathrm{act}}(u_i - \tfrac12,u_{i+k}-\tfrac12) = k$.
This proves (ii).

Vice versa, let us assume (ii) holds and deduce \eqref{eq:shortcrit}.
Let $a<b\in\interval{\tfrac12,2n-\tfrac12}$.
Then there exist $i\leq j\in \interval{1,2k}$ such that $u_{i-1} < a < u_i$ and $u_{j-1} < b < u_j$, using the convention $u_0 = 0$.
We claim that 
\begin{align}
	d_{\chi'}(a,b) \geq d_{\chi}(a,b) \quad \text{for }a<b\in\interval{\tfrac12,2n-\tfrac12}\text{ with equality if }j-i \leq k.\label{eq:distineq}
\end{align}
If $j-i \leq k$, assumption (ii) implies that $d_{\chi'}^{\mathrm{act}}(a,b) = j-i = d_{\chi}^{\mathrm{min}}(a,b)$ and therefore $d_{\chi'}(a,b) = d_{\chi}(a,b)$.
If $j-i > k$, then the triangle inequality for $d_{\chi'}^{\mathrm{act}}$ implies that 
\begin{align}
	d_{\chi'}^{\mathrm{act}}(a,b) \geq d_{\chi'}^{\mathrm{act}}(a,u_{i+k}-\tfrac12) - d_{\chi'}^{\mathrm{act}}(u_{i+k}-\tfrac12, b) = k - (j-i-k) = 2k+i-j = d_{\chi}^{\mathrm{min}}(a,b),
\end{align}
so $d_{\chi'}(a,b) \geq d_{\chi}(a,b)$, thereby establishing claim \eqref{eq:distineq}.
It remains to check \eqref{eq:shortcrit} in the following two distinct cases:
\begin{itemize}
	\item If $j-i \leq k$, we just observed that $d_{\chi'}(a,b) = d_{\chi}(a,b)$. By \eqref{eq:distineq}, we also have 
	\begin{align*}
		d_{\chi'}(a,\tfrac12) + d_{\chi'}(2n-\tfrac12,b) +1 \geq d_{\chi}(a,\tfrac12) + d_{\chi}(2n-\tfrac12,b) +1 \geq d_{\chi}(a,b)
	\end{align*}
	where we used the triangle inequality of $d_{\chi}$ in the last step. So \eqref{eq:shortcrit} holds.
	\item If $j-i \geq k+1$, then $i \leq k-1$ and $j \geq k+2$. From \eqref{eq:distineq} it follows that $d_{\chi'}(\tfrac12,a) = d_{\chi}(\tfrac12,a)$ and $d_{\chi'}(b,2n-\tfrac12)=d_{\chi}(b,2n-\tfrac12)$. 
	There is no chord of $\chi$ that starts in $\interval{1,a-\tfrac12}$ and ends in $\interval{b+\tfrac12,2n}$, because such a chord would be smaller, with respect to the partial order, than the minimal chord starting at $u_i$, because $a < u_i < \chi(u_i) = u_{i+k} < b$.  
	Since also $a < u_{k} < b$, the chord $\{2n,\chi(2n)\}$ crosses $\{a,b\}$, it follows that $d_{\chi}(a,\tfrac12) + d_{\chi}(2n-\tfrac12,b) + 1 = d_{\chi}(a,b)$.
	Hence  
	\begin{align*}
		d_{\chi}(a,b) = d_{\chi'}(a,\tfrac12) + d_{\chi'}(2n-\tfrac12,b) + 1. 
	\end{align*}
	Together with $d_{\chi'}(a,b) \geq d_{\chi}(a,b)$, this shows that \eqref{eq:shortcrit} again holds.
\end{itemize}
\end{proof}

\section{Enumeration}\label{sec:enum}

\subsection{Preliminaries on chord diagram enumeration}\label{sec:enumintro}

Let us write $m_{2n}(q)$ for the partition function of chord diagrams of size $2n$,
\begin{align}
	m_{2n}(q) = \sum_{\chi \in \chords_{2n}} q^{\intersec(\chi)}.
\end{align}
By the Touchard-Riordan formula \cite{Touchard1952,Riordan1975}, it is given explicitly by
\begin{align}
	m_{2n}(q) = \frac{1}{(1-q)^n}\sum_{k=0}^n(-1)^k \left[\binom{2n}{n-k}-\binom{2n}{n-k-1}\right] q^{\binom{k+1}{2}}. 
\end{align}
It is well-known \cite{Ismail1987,Penaud1995} that this enumeration is connected with the theory of orthogonal polynomials, particularly the $q$-Hermite polynomials.
By \cite[Sec.~2]{Ismail1987}, the \emph{renormalized $q$-Hermite polynomials} $\widetilde{H}_n(x|q)$ are characterized by the three-term recurrence
\begin{align}
	x \widetilde{H}_n(x|q) = \widetilde{H}_{n+1}(x|q) + [n]_q \widetilde{H}_{n-1}(x|q),\qquad \widetilde{H}_0(x|q) = 1, \quad \widetilde{H}_1(x|q) = x,
\end{align}
where $[n]_q = 1 + q + \cdots + q^{n-1}$ is the $q$-number \eqref{eq:qnumber}.
They are related to the more conventional \emph{continuous $q$-Hermite polynomials} in \eqref{eq:cqhermite} via 
\begin{align}
	\widetilde{H}_n(x|q) = \frac{H_n\left(\frac{\sqrt{1-q}}{2} x \middle| q\right)}{(1-q)^{n/2}}.\label{eq:renormqhermite}
\end{align}
The renormalized $q$-Hermite polynomials are orthogonal with respect to the integral
\begin{align}
	&\left\langle f \right\rangle_q \coloneqq \int_{-\frac{2}{\sqrt{1-q}}}^{\frac{2}{\sqrt{1-q}}} f(x) \rho_q(x)\rmd x, \qquad \rho_q(x) &= \frac{1}{2\pi} \frac{(q;q)_\infty}{\sqrt{\frac{4}{1-q}-x^2}}  \prod_{k=0}^\infty \left(1+(2-x^2(1-q))q^k+q^{2k}\right),\\
	&\left\langle \widetilde{H}_{m}(x|q) \widetilde{H}_{n}(x|q) \right\rangle_q = [n]_q!\,\delta_{mn} \label{eq:qhermiteortho}. 
\end{align}
According to \cite[(3.8)]{Ismail1987}, the partition functions $m_{2n}(q)$ are simply the moments 
\begin{align}
	m_{2n}(q) = \langle x^{2n} \rangle_q.\label{eq:moment}
\end{align}

For a chord diagram $\chi \in\mathcal{C}$ and $1\leq a < b \leq |\chi|$ we say that the circle segment $\interval{a,b}$ is \emph{geodesic} if the chords starting at $a, a+1, \ldots,b$ are all distinct.
The reason for this terminology is that this condition is equivalent to $d_{\chi}(a-\tfrac12,b+\tfrac12) = b-a+1$, so that the path $\interval{a-\tfrac12,b+\tfrac12}$ is a geodesic for the pseudometric $d_{\chi}$.
It follows from \cite[Theorem~3.2]{Ismail1987} for $n_1=n_2=\ldots=n_\ell = 1$ and $n_{\ell+1} = n$ that for $n,\ell\geq 1$ with $\ell+n$ even,
\begin{align}
	\left\langle x^n \widetilde{H}_{\ell}(x|q) \right\rangle_q = \sum_{\substack{\chi \in \chords_{\ell+n}\\ \interval{1,\ell}\text{ geodesic}}} q^{\intersec(\chi)}.\label{eq:distinterpret}
\end{align}
We will also say that $\interval{a,b}$ is \emph{strongly geodesic} if $\interval{a,b}$ is geodesic and the chords starting in $\interval{a,b}$ do not cross each other.
From the orthogonality \eqref{eq:qhermiteortho} and the moment relation \eqref{eq:moment} it follows that for every $n,m\geq 1$ with the same parity,
\begin{align}
	m_{n+m}(q) = \sum_{\ell \geq 0} \left\langle x^n \widetilde{H}_{\ell}(x|q) \right\rangle_q \frac{\left\langle x^m \widetilde{H}_{\ell}(x|q) \right\rangle_q}{[\ell]_q!}.
\end{align}
It is easily seen that this is a manifestation of the canonical decomposition of a chord diagram $\chi \in \mathcal{C}_{n+m}$ into a pair of chord diagrams $\chi_1 \in \mathcal{C}_{n+\ell}$ and $\chi_2\in \mathcal{C}_{m+\ell}$ in which $\interval{1,\ell}$ is geodesic for $\chi_1$ and strongly geodesic for $\chi_2$.
So 
\begin{align}
	\frac{\left\langle x^n \widetilde{H}_{\ell}(x|q) \right\rangle_q}{[\ell]_q!} = \sum_{\substack{\chi \in \chords_{\ell+n}\\ \interval{1,\ell}\text{ strongly geodesic}}} q^{\intersec(\chi)}.\label{eq:strongdistinterpret}
\end{align} 
An important ingredient in the proof of Theorem~\ref{thm:main} is the following refined enumeration of chord diagrams.

\begin{figure}
	\centering
	\includegraphics[width=.5\linewidth]{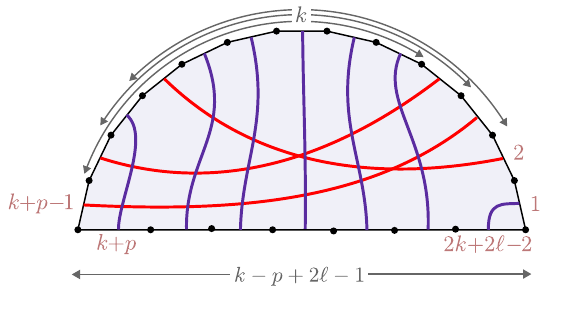}
	\caption{Example of a special chord diagram $\chi \in \mathcal{C}_{k,p,\ell}$ for $k=8$, $p=6$, $\ell = 3$. The bottom segment $k+p,\ldots,2k+2\ell-2$ is strongly geodesic because the $k-p+2\ell-1 = 7$ purple chords are distinct and disjoint. Furthermore, each $k$-tuple of consecutive sides on the top segment belongs to $k$ distinct chords.\label{fig:specialslice}}
\end{figure}

	\begin{proposition}\label{prop:specialslice}
		For $1 \leq \ell \leq p \leq k$, let us denote by $\mathcal{C}_{k,p,\ell} \subset \mathcal{C}_{2k+2\ell-2}$ the chord diagrams of size $2k+2\ell-2$ satisfying the following properties: $\interval{k+p,2k+2\ell-2}$ is strongly geodesic and for each $i=1, \ldots,p$ the segment $\interval{i,i+k-1}$ is geodesic (see Figure~\ref{fig:specialslice}).
		Then we have 
		\begin{align}
			\sum_{\chi \in \mathcal{C}_{k,p,\ell}} q^{\intersec(\chi)} = q^{\binom{k}{2}-\binom{k+\ell-p}{2}} S_q(p,\ell),\label{eq:specialslicegenfun}
		\end{align}
		where $S_q(\cdot,\cdot)$ denotes the $q$-Stirling number of the second kind (see Appendix~\ref{sec:qfunctions}).
	\end{proposition}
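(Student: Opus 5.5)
The plan is to prove \eqref{eq:specialslicegenfun} by induction, peeling off the chord attached to position $1$. The goal is to reproduce the standard recurrence of the $q$-Stirling numbers of the second kind from Appendix~\ref{sec:qfunctions}, which I expect to take the form $S_q(p,\ell)=S_q(p-1,\ell-1)+[\ell]_q\,S_q(p-1,\ell)$, with $S_q(p,p)=1$ and $S_q(p,0)=0$ for $p\ge1$; the exact normalization must be matched to the appendix. The prefactor $q^{\binom{k}{2}-\binom{k+\ell-p}{2}}$ should be tracked separately as the crossings that are forced by the geodesic conditions.

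\emph{Base case $p=\ell$.} Here the top segment $\interval{1,k+p-1}$ consists of the $p$ overlapping geodesic windows $\interval{i,i+k-1}$. The bottom segment $\interval{k+p,2k+2\ell-2}$ has length $k+\ell-1$ and is strongly geodesic. A counting argument should show the diagram is then unique:
\begin{itemize}
\item every window of $k$ consecutive top sides must carry $k$ distinct chords;
\item together with strong geodesicity of the bottom, this forces the chord ending at $i$ to be determined for each $i$;
\item concretely, $\ell-1$ top--top chords pair $i$ with $i+k$ for $i\le \ell-1$, and the remaining chords nest into the bottom.
\end{itemize}
One then checks that the crossing number is $\binom{k}{2}-\binom{k}{2}+\ldots$, i.e.\ that it equals the exponent $\binom{k}{2}-\binom{k+\ell-p}{2}$, which is $0$ for $p=\ell$. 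Small cases ($p=\ell=1$: the $k$ top chords must go to the $k$ bottom sides in nested fashion, giving the unique crossing-free diagram) confirm this.

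\emph{Inductive step.} For $p>\ell$ (or more generally $p\ge 2$), look at the chord $\{1,\chi(1)\}$. Geodesicity of $\interval{1,k}$ forces $\chi(1)\notin\interval{2,k}$, so there are two cases.
\begin{itemize}
\item $\chi(1)$ lies in the top segment. Geodesicity of the windows $\interval{i,i+k-1}$ containing $1$ and $\chi(1)$ forces $\chi(1)=k+1$ essentially. Removing this chord and rotating the labels should give a diagram in $\mathcal{C}_{k,p-1,\ell-1}$, after checking that the window conditions are inherited and the bottom stays strongly geodesic. The removed chord crosses exactly the $k-1$ chords from $\interval{2,k}$.
\item $\chi(1)$ lies in the bottom segment. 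Strong geodesicity forces it to be among a restricted set of positions compatible with non-crossing of bottom chords. Removing the chord and merging position $1$ into the bottom should give a diagram in $\mathcal{C}_{k,p-1,\ell}$ (after relabelling). The $\ell$ admissible landing positions contribute $1+q+\cdots+q^{\ell-1}=[\ell]_q$ through the extra crossings with the bottom chords.
\end{itemize}
In each case one compares $\binom{k}{2}-\binom{k+\ell-p}{2}$ before and after, to check that the forced crossings account exactly for the change of prefactor. Summing the two cases gives the $q$-Stirling recurrence.

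\emph{Main obstacle.} The hard part is the precise bookkeeping of the bijection in the inductive step. One must identify exactly which positions $\chi(1)$ may occupy, and show that after deleting the chord the window and strong-geodesic conditions are \emph{equivalent} to membership in the smaller class $\mathcal{C}_{k',p',\ell'}$, with the correct parameters. The parameters $(k,p,\ell)$ are constrained by size $2k+2\ell-2$, top length $k+p-1$ and bottom length $k-p+2\ell-1$, so a single chord removal may need to be combined with a shift of the top/bottom cut point.

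If a direct one-chord peeling does not close up, I would try first to peel the chord at the last top position $k+p-1$ instead, since it sits adjacent to the strongly geodesic bottom. Failing that, I would use \eqref{eq:strongdistinterpret} and \eqref{eq:distinterpret} to express the generating function as a $q$-Hermite moment $\langle \cdots\rangle_q$. One would then verify \eqref{eq:specialslicegenfun} via the known expansion of $x^p$-type products in the $\widetilde{H}_\ell$ basis, whose coefficients are $q$-Stirling numbers of the second kind.
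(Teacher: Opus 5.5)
\textbf{Your inductive step breaks down at the case analysis for position $1$, which is essentially inverted.} Peeling at position $1$ is not itself the problem. The reflection $i\mapsto k+p-i \pmod{2k+2\ell-2}$ maps $\mathcal{C}_{k,p,\ell}$ to itself, preserves crossings, and swaps position $1$ with $k+p-1$, which is where the paper peels.

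The structural fact you are missing is a reformulation: $\chi\in\mathcal{C}_{k,p,\ell}$ iff
\begin{itemize}
\item every chord is top--top (horizontal) or top--bottom (vertical);
\item the vertical chords are pairwise non-crossing;
\item each horizontal chord $\{a,b\}$ has $|a-b|\ge k$.
\end{itemize}
Hence there are exactly $p-\ell$ horizontal and $k-p+2\ell-1$ vertical chords, and the vertical ones are nested. Both of your cases then fail.

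\emph{Case (a), $\chi(1)$ in the bottom.} Here $\chi(1)=2k+2\ell-2$ is forced, so there is one landing position, not $\ell$. This chord joins cyclically adjacent points and crosses nothing. Deleting it gives a diagram of size $2k+2\ell-4$ in $\mathcal{C}_{k,p-1,\ell-1}$. It cannot lie in $\mathcal{C}_{k,p-1,\ell}$, whose diagrams have size $2k+2\ell-2$.

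\emph{Case (b), $\chi(1)$ in the top.} Here $\chi(1)$ is not forced to equal $k+1$; for instance $\{1,5\},\{2,8\},\{3,7\},\{4,6\}\in\mathcal{C}_{3,3,2}$. Deleting a horizontal chord can never land in $\mathcal{C}_{k,p-1,\ell-1}$. It leaves $p-\ell-1$ horizontal and $k-p+2\ell-1$ vertical chords, whereas that class needs $p-\ell$ and $k-p+2\ell-2$. Your claimed $k-1$ forced crossings would also contradict the recurrence, because the prefactors $q^{\binom{k}{2}-\binom{k+\ell-p}{2}}$ for $(p,\ell)$ and $(p-1,\ell-1)$ coincide.

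\textbf{What actually works.} Case (a) produces the $S_q(p-1,\ell-1)$ term with no change in crossings. In case (b) you must not delete. Instead perform a size-preserving move of the endpoint at $1$ into the bottom, making the chord vertical and disjoint from the other vertical chords. After shifting labels this is a bijection onto pairs consisting of some $\chi'\in\mathcal{C}_{k,p-1,\ell}$ and one of its $\ell$ vertical chords with top endpoint in $\interval{k,k+p-2}$. The number of extra crossings runs through $k+\ell-p,\ldots,k+2\ell-p-1$. The resulting factor is therefore $q^{k+\ell-p}[\ell]_q$, not just $[\ell]_q$. The offset $q^{k+\ell-p}$ is exactly what turns $q^{\binom{k}{2}-\binom{k+\ell-p+1}{2}}$ into $q^{\binom{k}{2}-\binom{k+\ell-p}{2}}$.

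This is the mirror image of the paper's proof, which is your first fallback. The paper peels at $k+p-1$: the vertical case means $\chi(k+p-1)=k+p$ and is handled by deletion, and the horizontal case is handled by moving to $m\in\interval{2k+\ell-1,2k+2\ell-2}$.

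\textbf{Smaller points.} Your base-case conclusion is right, but the description is not. For $p=\ell$ there are no top--top chords; the unique diagram consists of the $k+\ell-1$ nested vertical chords $\{i,2k+2\ell-1-i\}$. The $q$-Hermite fallback is too vague to close the gap. The coefficients of $x^n$ in the $\widetilde{H}_\ell$ basis are the strongly-geodesic counts of \eqref{eq:strongdistinterpret}, not $q$-Stirling numbers.
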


	\begin{proof}
		We start by establishing an alternative characterization of these chord diagrams.
		Let $\chi \in \mathcal{C}_{2k+2\ell-2}$.
		We refer to the segment $\interval{1,k+p-1}$ as the \emph{top} of $\chi$ and the complementary segment $\interval{k+p,2k+2\ell-2}$ as the \emph{bottom}. 
		Let us also call a chord \emph{horizontal} if both endpoints are on the top of $\chi$, and \emph{vertical} if it has one endpoint on the top and one on the bottom.
		Then it is not difficult to see that $\chi \in \mathcal{C}_{k,p,\ell}$ if and only if
		\begin{enumerate}[(i)]
			\item all chords of $\chi$ are horizontal or vertical;
			\item all vertical chords are disjoint;
			\item and for each horizontal chord $\{ a,b \}$ we have $|a-b| \geq k$.
		\end{enumerate}
		Note that in this case there are precisely $k-p+2\ell-1$ vertical chords and $p-\ell$ horizontal chords.
		By the last property, each horizontal chord $\{ a,b\}$ with $a<b$ satisfies $1\leq a \leq p-1 < k+1 \leq b \leq k+p-1$.
		From this we deduce that for each $i=p,\ldots,k$ the chord starting at $i$ must be vertical, and must intersect all horizontal chords.
		Hence, there are precisely $i-1 - p+\ell$ vertical chords to its right, so that
		\begin{align}
			\chi(i) = 2k+2\ell-2 - (i-1 -p + \ell) = 2k + \ell + p - i - 1, \quad\text{for }\chi\in \mathcal{C}_{k,p,\ell}\text{ and }i=p,\ldots,k.\label{eq:middlevertical}
		\end{align}

		We will now prove the proposition by fixing $k \geq 1$ and performing induction in $p$. 
		For the base case $p=1$, we necessarily have $\ell=1$ and $\mathcal{C}_{k,1,1}$ contains a single chord diagram with $k$ parallel chords. 
		Hence $\sum_{\chi \in \mathcal{C}_{k,1,1}} q^{\intersec(\chi)}= 1$, which is in agreement with \eqref{eq:specialslicegenfun} because of \eqref{eq:qstirlingk1}.

		Let now $2\leq p \leq k$ and suppose that \eqref{eq:specialslicegenfun} holds with $p$ replaced by $p-1$.
		Let $1\leq \ell \leq p$.
		We partition $\mathcal{C}_{k,p,\ell} = \mathcal{C}^{\mathrm{V}}_{k,p,\ell}\cup \mathcal{C}^{\mathrm{H}}_{k,p,\ell}$ depending on whether the chord at $k+p-1$ is vertical or horizontal, i.e.\ $\mathcal{C}^{\mathrm{V}}_{k,p,\ell} = \{\chi \in \mathcal{C}_{k,p,\ell}: \chi(k+p-1)=k+p\}$.
	
		If $\chi \in \mathcal{C}^{\mathrm{V}}_{k,p,\ell}$, we let $\chi'$ be the chord diagram obtained by deleting the vertical chord $\{k+p-1,k+p\}$.
		This is easily seen to be a bijection $\mathcal{C}^{\mathrm{V}}_{k,p,\ell} \to \mathcal{C}_{k,p-1,\ell-1}$, since the properties (i)-(iii) are preserved.
		Because also $\intersec(\chi) = \intersec(\chi')$, we conclude that
		\begin{align}
			\sum_{\chi \in \mathcal{C}^{\mathrm{V}}_{k,p,\ell}} q^{\intersec(\chi)} = \sum_{\chi \in \mathcal{C}_{k,p-1,\ell-1}} q^{\intersec(\chi')} = q^{\binom{k}{2}-\binom{k+\ell-p}{2}} S_q(p-1,\ell-1).\label{eq:specialsliceA}
		\end{align}

		\begin{figure}
			\centering
			\includegraphics[width=.95\linewidth]{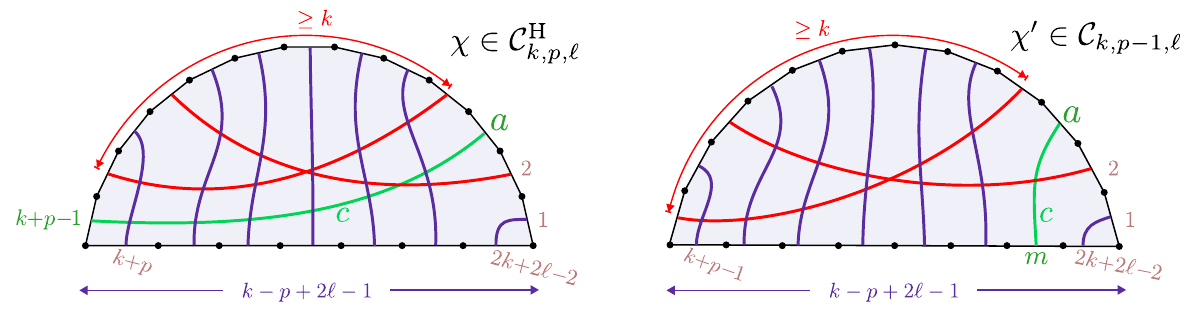}
			\caption{The example $\chi$ of Figure~\ref{fig:specialslice} belongs to $\mathcal{C}^{\mathrm{H}}_{k,p,\ell}$ because the chord $c = \{k+p-1,a\}$ is horizontal (in green). The new chord diagram $\chi'$ is obtained by moving one endpoint of $c$ to become vertical and disjoint from the other vertical chords. In this case $k=8$, $p=6$, $\ell=3$, $a = 3$, $m=19$. \label{fig:specialslice2}}
		\end{figure}
		If $\chi \in\mathcal{C}^{\mathrm{H}}_{k,p,\ell}$, the chord $c = \{k+p-1,a\}$ with $a= \chi(k+p-1)$ is horizontal, and by our previous observations must satisfy $1 \leq a \leq p-1$.
		Let now $\chi'$ be the chord diagram obtained from $\chi$ by turning the horizontal chord $c$ into a vertical chord of $\chi'$ while keeping its endpoint at $a$ (Figure~\ref{fig:specialslice2}).
		To be precise, we move the endpoint of $c$ at $k+p-1$ to the unique position $m > k+p-1$ to ensure that $\chi'$ has a strongly geodesic segment $\interval{k+p-1,2k+2\ell -2}$.
		Since the new chord is disjoint from the other vertical chords, it should be clear that $\chi'$ still obeys properties (i)-(iii) if we take the top of $\chi'$ to be shorter by one, thus $\chi' \in \mathcal{C}_{k,p-1,\ell}$.
		Given the pair $(m,\chi')$, one can of course reconstruct $\chi$ by moving the endpoint at $m$ back to $k+p-1$.
		Let us denote this moving operation for arbitrary $m=k+p,\ldots,2k+2\ell-2$ by $\mathsf{Move}_{k,p,\ell}(m,\chi')$.
		We claim that restricted to $m \geq 2k+\ell-1$ this determines a bijection
		\begin{align}
			\mathsf{Move}_{k,p,\ell}:\interval{2k+\ell-1,2k+2\ell-2} \times \mathcal{C}_{k,p-1,\ell} \to \mathcal{C}^{\mathrm{H}}_{k,p,\ell}.\label{eq:movebijection}
		\end{align}

		To verify the bijection, it is sufficient to check that for $k+p \leq m \leq 2k+2\ell-2$ and $\chi'\in\mathcal{C}_{k,p-1,\ell}$, the result $\chi=\mathsf{Move}_{k,p,\ell}(m,\chi')\in \mathcal{C}^{\mathrm{H}}_{k,p,\ell}$ if and only if $m \geq 2k+\ell-1$.
		Apart from the special chord $c$, the horizontal and vertical chords of $\chi'$ and their properties (i)-(iii) are preserved by $\mathsf{Move}_{k,p,\ell}$, so this is equivalent to $c$ itself satisfying (iii), i.e.\ $\chi(k+p-1) \leq p-1$.
		By \eqref{eq:middlevertical} with $p$ replaced by $p-1$, we have that $\chi'(2k+\ell-1) = p-1$ and $\chi'(2k+\ell-2) = p$.
		Since vertical chords are disjoint, this implies that $\chi'(m) \leq p-1$ if and only if $m \geq 2k+\ell-1$.
		Since $\chi(k+p-1) = \chi'(m)$, this verifies the claimed bijection.

		If $\chi = \mathsf{Move}_{k,p,\ell}(m,\chi')$, we can easily compare the number of crossings of the special chord $c$ in $\chi'$ to that in $\chi$.
		The number of crossings with horizontal chords is unchanged, while $c$ crosses exactly $m-k-p+1$ vertical chords in $\chi$ and none in $\chi'$ (see again Figure~\ref{fig:specialslice2}).
		Hence, $\ell_\chi(c) = \ell_{\chi'}(c) + m-k-p+1$.
		Since the other chords are unaffected, $\intersec(\chi) = m-k-p+1 + \intersec(\chi')$.
		The bijection \eqref{eq:movebijection} then implies
		\begin{align*}
		\sum_{\chi \in \mathcal{C}^{\mathrm{H}}_{k,p,\ell}} q^{\intersec(\chi)} &= \sum_{m=2k+\ell-1}^{2k+2\ell-2} q^{m-k-p+1} \sum_{\chi \in \mathcal{C}_{k,p-1,\ell}} q^{\intersec(\chi')}  \\
		&= q^{k+\ell-p} [\ell]_q\,q^{\binom{k}{2}-\binom{k+\ell-p+1}{2}} S_q(p-1,\ell)\\
		&= [\ell]_q\,q^{\binom{k}{2}-\binom{k+\ell-p}{2}} S_q(p-1,\ell).
		\end{align*}
		Combining with \eqref{eq:specialsliceA} and using the recurrence relation \eqref{eq:qstirlingsecond}, this gives
		\begin{align*}
			\sum_{\chi \in \mathcal{C}_{k,p,\ell}} q^{\intersec(\chi)} = q^{\binom{k}{2}-\binom{k+\ell-p}{2}} \left( S_q(p-1,\ell-1) + [\ell]_q S_q(p-1,\ell) \right) = q^{\binom{k}{2}-\binom{k+\ell-p}{2}} S_q(p,\ell).
		\end{align*} 
		This completes the proof by induction.
	\end{proof}

\subsection{What is special about the weights?}

Recall that we will use the special weight sequence $\mathbf{t}(q)$ defined in \eqref{eq:specweights} whose potential derivative is given by 
\begin{align}
	V_q'(x) &= x - \sum_{k=1}^\infty t_{2k}(q) x^{2k-1}\nonumber\\
	&= 2\sqrt{1-q} \sum_{\ell\geq 1} (-1)^{\ell-1} q^{\binom{\ell}{2}} T_{2\ell-1}\left(\frac{\sqrt{1-q}}{2} x\right)\label{eq:potential}
\end{align}
in terms of the Chebyshev polynomials of the first kind.
Besides the fact that $t_{2k}(q) = O(q)$ for all $k\geq 1$, the only property we will use about these weights in the proog of Theorem~\ref{thm:main} is the following expression for the inner product between the potential derivative and the renormalized $q$-Hermite polynomials.

\begin{lemma}\label{lem:potentialinqhermite}
	For each $\ell \geq 1$, the potential derivative $V_q'(x)$ of \eqref{eq:potential} and the odd-degree renormalized Hermite polynomial of \eqref{eq:renormqhermite} satisfy
	\begin{align*}
		\left\langle V_q'(x)\widetilde{H}_{2\ell-1}(x|q) \right\rangle_q = q^{\binom{\ell}{2}} s_q(\ell,1),
	\end{align*}
	where $s_q(\cdot,\cdot)$ denotes the $q$-Stirling number of the first kind (see Appendix~\ref{sec:qfunctions}).
\end{lemma}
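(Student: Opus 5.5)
The plan is to expand $V_q'$ in the orthogonal basis of renormalized $q$-Hermite polynomials. If
\begin{align*}
V_q'(x)=\sum_{m\ \mathrm{odd}} c_m\,\widetilde{H}_m(x|q),
\end{align*}
then orthogonality \eqref{eq:qhermiteortho} gives
\begin{align*}
\left\langle V_q'(x)\widetilde{H}_{2\ell-1}(x|q)\right\rangle_q=[2\ell-1]_q!\,c_{2\ell-1}.
\end{align*}
So it suffices to compute the connection coefficients between the Chebyshev polynomials $T_{2j-1}$ and the $\widetilde{H}_m$, and then sum them against the coefficients $(-1)^{j-1}q^{\binom{j}{2}}$ in \eqref{eq:potential}.

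\textbf{Step 1: move to the trigonometric variable.} Substitute $\frac{\sqrt{1-q}}{2}x=\cos\theta$. By \eqref{eq:renormqhermite}, $\widetilde{H}_m(x|q)=(1-q)^{-m/2}H_m(\cos\theta|q)$. I would use the classical expansion
\begin{align*}
H_m(\cos\theta|q)=\sum_{r=0}^m\qbinom{m}{r}_q e^{i(m-2r)\theta}.
\end{align*}
Invert it via the generating function
\begin{align*}
\sum_m H_m\frac{t^m}{(q;q)_m}=\frac{1}{(te^{i\theta};q)_\infty(te^{-i\theta};q)_\infty}.
\end{align*}
Multiplying both sides by $(te^{i\theta};q)_\infty(te^{-i\theta};q)_\infty$ and expanding each factor with Euler's identity $(z;q)_\infty=\sum_j(-1)^jq^{\binom{j}{2}}z^j/(q;q)_j$ expresses $e^{in\theta}$, and hence $2T_n(\cos\theta)=e^{in\theta}+e^{-in\theta}$, as an explicit finite combination of the $H_{n-2r}$. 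The coefficients of that combination are signed $q$-binomials carrying $q^{\binom{\cdot}{2}}$ factors. Equivalently, $c_m$ can be read off as the coefficient of $t^m$ in a generating function built from $V_q'$ and the product $(te^{i\theta};q)_\infty(te^{-i\theta};q)_\infty$.

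\textbf{Step 2: assemble $c_{2\ell-1}$.} Insert the connection coefficients into \eqref{eq:potential}, keeping track of the powers of $\sqrt{1-q}$; the prefactor $2\sqrt{1-q}$ should exactly cancel the normalization of $\widetilde{H}$. This gives $c_{2\ell-1}$ as a single alternating sum over $j\geq \ell$. Its terms are products of $(-1)^{j-1}q^{\binom{j}{2}}$ with $q$-binomials from the inversion formula. The sum is infinite as a formal power series in $q$, but each coefficient of $q$ receives only finitely many contributions, since $q^{\binom{j}{2}}$ grows with $j$.

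\textbf{Step 3: evaluate the sum (the main obstacle).} Evaluating this alternating series in closed form is where the real work lies. I expect the combined exponents $q^{\binom{j}{2}}\cdot q^{\binom{j-\ell}{2}}$-type to telescope into an instance of the $q$-binomial theorem, or a terminating $q$-Chu--Vandermonde / $q$-Gauss summation. That should collapse the sum to a single product of the form $q^{\binom{\ell}{2}}\times(\text{signed }q\text{-factorial})/[2\ell-1]_q!$. My first attempt would be to reorganize the sum by the Jacobi triple product applied to $(e^{2i\theta};q)_\infty(e^{-2i\theta};q)_\infty$, the $\theta$-dependent part of $\rho_q$. Under this approach,
\begin{align*}
\left\langle T_{2j-1}\widetilde{H}_{2\ell-1}\right\rangle_q
\end{align*}
becomes a finite Fourier-coefficient computation. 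Summing these against $(-1)^{j-1}q^{\binom{j}{2}}$ then reduces to a theta-function identity.

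Finally, I would match the result with the appendix's $s_q(\ell,1)$. Under the standard recurrence $s_q(n+1,k)=s_q(n,k-1)-[n]_q\,s_q(n,k)$, one has $s_q(\ell,1)=(-1)^{\ell-1}[\ell-1]_q!$. I would sanity-check the whole computation at $\ell=1,2$ directly from \eqref{eq:specweights}. For $\ell=1$ the pairing $\langle V_q'(x)\,x\rangle_q$ can be computed from the moments \eqref{eq:moment}, and it should equal $1$.
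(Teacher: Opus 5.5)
\textbf{Verdict: there is a genuine gap.} Your framework is sound: expand $V_q'$ in the $\widetilde{H}_m$ and use $\langle V_q'\widetilde{H}_{2\ell-1}\rangle_q=[2\ell-1]_q!\,c_{2\ell-1}$. But the proposal stops exactly where the content of the lemma lies. Neither the inverse connection coefficients nor the evaluation of the resulting series is actually obtained, and the tools you name for both steps would not work as described.

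\emph{Step~1.} Multiplying the generating function by $(te^{i\theta};q)_\infty(te^{-i\theta};q)_\infty$ only gives a tautology: the $t^N$-coefficient of $(te^{i\theta};q)_\infty(te^{-i\theta};q)_\infty\sum_m H_m t^m/(q;q)_m$ equals $\delta_{N,0}$. This is a relation among products $e^{ik\theta}H_m$, not an expansion of $T_n$ in the $H_m$. Moreover $e^{in\theta}$ alone is not even in $\theta$, so it has no such expansion at all. The inversion you actually need is
\begin{align*}
2T_n(\cos\theta)=\sum_{k=0}^{\lfloor n/2\rfloor}(-1)^k q^{\binom{k}{2}}\,\frac{(1-q^n)\,(q;q)_{n-k-1}}{(q;q)_k\,(q;q)_{n-2k}}\,H_{n-2k}(\cos\theta|q).
\end{align*}
It has to be supplied by other means, e.g.\ as a limiting case of Rogers' connection formula for $q$-ultraspherical polynomials, or by checking it against the three-term recurrence.

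\emph{Step~3.} With this input, the lemma becomes the identity
\begin{align*}
\sum_{m\geq 0} q^{2\binom{m}{2}+\ell m}\,\bigl(1-q^{2m+2\ell-1}\bigr)\,\frac{(q^{2\ell-1};q)_m}{(q;q)_m}=\frac{1}{(q^{\ell};q)_{\ell-1}}.
\end{align*}
This is a nonterminating very-well-poised series. It is not a terminating $q$-Chu--Vandermonde or $q$-Gauss sum, and the $q$-binomial theorem does not apply to it directly. It does follow from Rogers' nonterminating ${}_6\phi_5$ summation with $a=q^{2\ell-1}$, $d=q^{\ell}$ and $b,c\to\infty$, but that ingredient is absent from your plan. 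Your Jacobi-triple-product alternative is likewise only announced, and it would leave a double sum needing its own identity.

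\emph{How the paper avoids this.} The paper sidesteps the inversion altogether. The target value already forces $c_{2k-1}=(-1)^{k+1}q^{\binom{k}{2}}[k-1]_q!/[2k-1]_q!$, so it posits this expansion of $V_q'$ and verifies it using only the forward expansion \eqref{eq:qhermitechebyshev}. Comparing coefficients of $T_{2p-1}$ reduces everything to
\begin{align*}
\sum_{k\geq p}(-1)^{k-p}q^{\binom{k}{2}-\binom{p}{2}}\frac{(q;q)_{k-1}}{(q;q)_{k-p}\,(q;q)_{k+p-1}}=1 \qquad (p\geq 1).
\end{align*}
This is proved by expanding $(q;q)_{k-1}/(q;q)_{k+p-1}=1/(q^{k};q)_{p}$ with the $q$-binomial theorem, interchanging the sums, and applying Euler's two identities for $(a;q)_\infty^{\pm1}$.
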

\begin{proof}
	From the expansion \eqref{eq:qhermitechebyshev} of the continuous $q$-Hermite polynomials in terms of Chebyshev polynomials and the relation \eqref{eq:renormqhermite} it follows that
	\begin{align}
		\widetilde{H}_{2k-1}(x|q) = 2(1-q)^{\frac12-k} \sum_{p=1}^k \qbinom{2k-1}{k-p}_q T_{2p-1}\left(\frac{\sqrt{1-q}}{2} x\right).
	\end{align}
	We claim that
	\begin{align}
		V_q'(x) = \sum_{k\geq 1} (-1)^{k+1} q^{\binom{k}{2}} \frac{[k-1]_q!}{[2k-1]_q!} \widetilde{H}_{2k-1}(x|q).
	\end{align}
	In that case, the orthogonality \eqref{eq:qhermiteortho} of the $q$-Hermite polynomials implies that for $\ell\geq 1$ we have the identity
	\begin{align*}
		\left\langle V_q'(x)\widetilde{H}_{2\ell-1}(x|q) \right\rangle_q = q^{\binom{\ell}{2}} (-1)^{\ell+1} [\ell-1]_q! \stackrel{\eqref{eq:qstirlingk1}}{=} q^{\binom{\ell}{2}} s_q(\ell,1).
	\end{align*}
	To establish the claim it is sufficient to show that
	\begin{align}
		\sum_{k=p}^\infty (-1)^{k-p} q^{\binom{k}{2}-\binom{p}{2}} (1-q)^{-k} \frac{[k-1]_q!}{[2k-1]_q!}\qbinom{2k-1}{k-p}_q = 1
	\end{align}
	for each $p\geq 1$.
	To this end, we rewrite the left-hand side with the help of \eqref{eq:qbinom} and \eqref{eq:qfactorialpoch} as
	\begin{align*}
		\sum_{k=p}^\infty (-1)^{k-p} q^{\binom{k}{2}-\binom{p}{2}} \frac{(q;q)_{k-1}}{(q;q)_{k-p}(q;q)_{k+p-1}}
		&= \sum_{\ell=0}^\infty (-1)^{\ell} \frac{q^{\binom{\ell}{2}}}{(q;q)_{\ell}} q^{p\ell}  \frac{(q;q)_{p+\ell-1}}{(q;q)_{2p+\ell-1}}\\
		&= \sum_{\ell=0}^\infty \frac{q^{\binom{\ell}{2}}}{(q;q)_{\ell}} \frac{(-q^p)^\ell}{(q^{p+\ell};q)_{p}}\\
		&\stackrel{\eqref{eq:qbinomtheorem}}{=} \sum_{\ell=0}^\infty \frac{q^{\binom{\ell}{2}}}{(q;q)_{\ell}} (-q^p)^\ell \sum_{k=0}^\infty \qbinom{p+k-1}{k}_q q^{(p+\ell)k}\\
		&= \sum_{k=0}^\infty\qbinom{p+k-1}{k}_q q^{pk}\sum_{\ell=0}^\infty \frac{q^{\binom{\ell}{2}}}{(q;q)_{\ell}} (-q^{p+k})^\ell\\
		&\stackrel{\eqref{eq:qinftyseries}}{=} \sum_{k=0}^\infty\qbinom{p+k-1}{k}_q q^{pk} (q^{p+k};q)_\infty\\
		&\stackrel{\eqref{eq:qbinom}}{=} \sum_{k=0}^\infty \frac{q^{pk}}{(q;q)_{k}} (q^p;q)_k(q^{p+k};q)_\infty \\
		&= \sum_{k=0}^\infty \frac{q^{pk}}{(q;q)_{k}} (q^p;q)_\infty \stackrel{\eqref{eq:reciprocalqpoch}}{=} 1.
	\end{align*}
	This finishes the proof of the claimed expansion.

\end{proof}

\subsection{Main chord diagram identity}

We are ready to combine Proposition~\ref{prop:shortsufficient}, which characterizes the chord diagrams $\chi'$ that yield the same chord diagram $\chi$ after introducing a shortcut, with the enumeration identities Proposition~\ref{prop:specialslice} and Lemma~\ref{lem:potentialinqhermite}.

\begin{proposition}\label{prop:tuttechord}
	Let $n\geq1$ and $\chi \in \mathcal{C}_{2n}$. Then 
	\begin{align}
		\sum_{p\geq 1} t_{2p}(q) \sum_{\substack{\chi' \in \chords_{2n+2p-2}\\ \mathsf{Short}_{n,p}(\chi')=\chi}} q^{\intersec(\chi')} = \begin{cases} q^{\intersec(\chi)} &\text{if }\ell_\chi(2n) > 0\\ 0 &\text{if }\ell_\chi(2n)=0,\end{cases}\label{eq:chordclaim}
	\end{align}
	where $\ell_\chi(2n)$ is the number of chords crossing the chord starting at $2n$.
\end{proposition}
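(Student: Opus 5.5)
The plan is to use Proposition~\ref{prop:shortsufficient} to parametrize all $\chi'$ with $\mathsf{Short}_{n,p}(\chi')=\chi$ by a few independent pieces. The $q$-weights of these pieces should then factor into the generating function of Proposition~\ref{prop:specialslice} times a $q$-Hermite inner product of the kind in \eqref{eq:distinterpret}. After summing over $p$ against $t_{2p}(q)$, Lemma~\ref{lem:potentialinqhermite} and the orthogonality of the $q$-Stirling numbers of the two kinds should collapse everything.

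\textbf{Step 1: decompose $\chi'$.} Let $k$ be the number of minimal chords of $\chi$, with endpoints $u_1<\cdots<u_{2k}=2n$. By Proposition~\ref{prop:shortsufficient}, $\chi'$ consists of
\begin{itemize}
\item the nonminimal chords of $\chi$, which are forced;
\item an arbitrary pairing of the $2k-1+2p-1=2k+2p-2$ \emph{active} positions, namely $u_1,\ldots,u_{2k-1}$ together with $\interval{2n,2n+2p-2}$, subject only to condition (ii).
\end{itemize}
Relabel the active positions cyclically as $1,\ldots,2k+2p-2$, with $u_1,\ldots,u_{2k-1}$ as the first $2k-1$ labels. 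Condition (ii) then says exactly that the windows $\interval{i,i+k-1}$ are geodesic for $i=1,\ldots,k$.

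\textbf{Step 2: split off the slice.} Apply the canonical decomposition used before \eqref{eq:strongdistinterpret}, cutting along the arc between the top segment $\interval{1,2k-1}$ and the remaining $2p-1$ points. Let $2\ell-1$ be the number of chords crossing the cut; it is odd because the top segment has odd length. The active diagram is then encoded by a pair:
\begin{itemize}
\item a diagram $\chi_1\in\mathcal{C}_{k,k,\ell}$, where the top has length $2k-1=k+p-1$ with $p=k$ in the notation of Proposition~\ref{prop:specialslice}, and the bottom segment of length $2\ell-1$ is strongly geodesic;
\item a diagram $\chi_0$ on $2\ell-1+2p-1$ points in which the segment of length $2\ell-1$ is geodesic.
\end{itemize}

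\textbf{Step 3: crossing count (the main obstacle).} I expect the main work to be verifying that
\[
\intersec(\chi')=\intersec(\chi)-\tbinom{k}{2}+\intersec(\chi_1)+\intersec(\chi_0).
\]
The check rests on three observations:
\begin{itemize}
\item the nonminimal chords are common to $\chi$ and $\chi'$, and their crossings among themselves are unchanged;
\item each nonminimal chord crosses the active chords of $\chi'$ exactly as often as it crosses the minimal chords of $\chi$, via the decomposition $d_{\chi'}=d^{\mathrm{act}}_{\chi'}+d^{\mathrm{non}}_\chi$ in the proof of Proposition~\ref{prop:shortsufficient};
\item the $\binom{k}{2}$ mutual crossings of the minimal chords are removed and replaced by the internal crossings of $\chi_1$ and $\chi_0$.
\end{itemize}
The delicate point is the second observation. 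I would check it chord by chord using the fact that every nonminimal chord lies "above" a minimal one or crosses the $\{2n,\chi(2n)\}$ chord in a controlled way.

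\textbf{Step 4: sum over the pieces.} Granting Step 3, Proposition~\ref{prop:specialslice}, \eqref{eq:distinterpret} and the decomposition with the $[2\ell-1]_q!$ normalization give
\[
\sum_{\mathsf{Short}_{n,p}(\chi')=\chi}q^{\intersec(\chi')}=q^{\intersec(\chi)}\sum_{\ell=1}^{k}q^{-\binom{\ell}{2}}S_q(k,\ell)\,\big\langle x^{2p-1}\widetilde H_{2\ell-1}(x|q)\big\rangle_q .
\]
Summing over $p$ with $\sum_p t_{2p}x^{2p-1}=x-V_q'(x)$, and using $\langle x\widetilde H_{2\ell-1}\rangle_q=\delta_{\ell1}$ together with Lemma~\ref{lem:potentialinqhermite}, the left side of \eqref{eq:chordclaim} becomes
\[
q^{\intersec(\chi)}\Big(S_q(k,1)-\sum_{\ell}S_q(k,\ell)s_q(\ell,1)\Big)=q^{\intersec(\chi)}(1-\delta_{k1}).
\]
Here the last equality uses $S_q(k,1)=1$ and the orthogonality $\sum_\ell S_q(k,\ell)s_q(\ell,1)=\delta_{k1}$. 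Finally, $k=1$ holds iff the chord at $2n$ is the only minimal chord, i.e.\ iff it is crossed by no chord, which is $\ell_\chi(2n)=0$.
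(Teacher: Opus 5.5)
Your proposal is correct and is essentially the paper's proof. The paper's three-part decomposition into $\chi_0',\chi_1',\chi_2'$ is exactly your forced nonminimal chords plus the cut of the active diagram into a $\mathcal{C}_{k,k,\ell}$ piece and a piece counted by \eqref{eq:distinterpret}, and it concludes with the same use of Lemma~\ref{lem:potentialinqhermite} and Stirling orthogonality. The crossing identity in your Step~3 is what the paper asserts without proof as $\intersec(\chi_0')=\intersec(\chi)-\binom{k}{2}$. Your second observation there follows at once from Lemma~\ref{lem:chorddist} and \eqref{eq:parallelchord}: for a nonminimal chord $\{u<v\}$ they give $\ell_{\chi'}(u)=d_{\chi'}(u+\tfrac12,v-\tfrac12)=d_{\chi}(u+\tfrac12,v-\tfrac12)=\ell_\chi(u)$.
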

\begin{proof}
	Let us fix $n,p\geq 1$ and $\chi\in \mathcal{C}_{2n}$ and denote the number of minimal chords of $\chi$ by $k$.
	If $\chi' \in \mathcal{C}_{2n+2p-2}$, then $d_{\chi'}(\tfrac12,2n-\tfrac12) = 2\ell-1 \geq 1$ is necessarily odd.
	We observe that $\chi'$ satisfies properties (i) and (ii) of Proposition~\ref{prop:shortsufficient} precisely when we can decompose $\chi'$ into a triple of chord diagrams $\chi_0',\chi_1',\chi_2'$ (see the right illustration of Figure~\ref{fig:shortcut-decomp}): 
	\begin{itemize}
		\item the top part $\chi_0' \in \mathcal{C}_{2n+2k-2}$ is completely determined by $\chi$ and contains all nonminimal chords of $\chi$ as well as $2k-1$ disjoint chords $\chi_0'(u_i)=2k-1-i$;
		\item the middle part $\chi_1' \in \mathcal{C}_{2k+2\ell-2}$ is such that the chords starting in $\interval{2k,2k+2\ell-2}$ are all distinct and disjoint, while the chords at $i,i+1,\ldots,i+k-1$ are distinct for all $i=1,\ldots,k$;
		\item the bottom part $\chi_2' \in \mathcal{C}_{2\ell+2p-2}$ in which the chords starting at $\interval{1,2\ell-1}$ are distinct.
	\end{itemize}
	We note that $\chi_1'$ is precisely of the type $\mathcal{C}_{k,k,\ell}$ enumerated in Proposition~\ref{prop:specialslice}, while $\chi_2'$ is precisely of the type enumerated by $\langle \widetilde{H}_{2\ell-1}(x|q)x^{2p-1}\rangle_q$ in \eqref{eq:distinterpret}.
	Since 
	\begin{align*}
	\intersec(\chi') = \intersec(\chi_0') + \intersec(\chi_1')+\intersec(\chi_2') = \intersec(\chi) - \binom{k}{2} + \intersec(\chi_1')+\intersec(\chi_2'),
	\end{align*}
	we deduce from Proposition~\ref{prop:shortsufficient}, Proposition~\ref{prop:specialslice} and \eqref{eq:distinterpret} that
	\begin{align*}
		\sum_{\substack{\chi' \in \chords_{2n+2p-2}\\ \mathsf{Short}_{n,p}(\chi')=\chi}} q^{\intersec(\chi')} &= q^{\intersec(\chi)-\binom{k}{2}}\sum_{\ell=1}^k\sum_{\chi_1' \in \chords_{k,k,\ell}} q^{\intersec(\chi_1')}\sum_{\substack{\chi_2' \in \chords_{2\ell+2p-2}\\\interval{1,2\ell-1}\text{ strongly geodesic}}} q^{\intersec(\chi_2')} \\
		&= q^{\intersec(\chi)}\sum_{\ell=1}^k  S_q(k,\ell) q^{-\binom{\ell}{2}}\langle \widetilde{H}_{2\ell-1}(x|q)x^{2p-1}\rangle_q.
	\end{align*}
	Applying Lemma~\ref{lem:potentialinqhermite} gives
	\begin{align}
		\sum_{p\geq 1} t_{2p}(q) \sum_{\substack{\chi' \in \chords_{2n+2p-2}\\ \mathsf{Short}_{n,p}(\chi')=\chi}} q^{\intersec(\chi')} &= q^{\intersec(\chi)} \sum_{\ell \geq 1} S_q(k,\ell) q^{-\binom{\ell}{2}}\left\langle (x - V_q'(x))\widetilde{H}_{2\ell-1}(x|q)\right\rangle_q\nonumber\\
		&= q^{\intersec(\chi)} \left(S_q(k,1) - \sum_{\ell \geq 1} S_q(k,\ell) s_q(\ell,1)\right)\\
		&= q^{\intersec(\chi)} (1 - \delta_{k,1}),\nonumber
	\end{align}
	where in the last step we used \eqref{eq:qstirlingk1} and  the orthogonality relation \eqref{eq:qstirlinginverse}.
	Since the only way to have just one minimal chord, $k=1$, is if the chord $\{2n,\chi(2n)\}$ is not crossed at all, $\ell_\chi(2n)=0$, this finishes the proof.
\end{proof}

\subsection{Proofs of Theorem~\ref{thm:main} and Corollary~\ref{cor:probabilistic}}\label{sec:mainproof}

\begin{proof}[Proof of Theorem~\ref{thm:main}]
	We will prove the theorem by induction on the order $m$ of $q$ and the size $2n$ of $\chi$ simultaneously.
	For the base case $m=0$ and $n\geq 1$, since $t_{2k}(q) = O(q)$ for all $k\geq 1$, the only maps contributing non-trivially to $F(\mathbf{t}(q);\chi)$ at zeroth order in $q$ are the ones without inner faces, i.e.\ plane trees. Such a map only exists when $\intersec(\chi) = 0$ and is uniquely given by gluing the sides of the $2n$-gon according to the involution $\chi$. Hence $F(\mathbf{t}(q);\chi) = q^{\intersec(\chi)} + O(q)$ holds for chord diagrams of arbitrary size.

	Let $m,n \geq 1$ and suppose that $F(\mathbf{t}(q);\chi) = q^{\intersec(\chi)} + O(q^m)$ for chord diagrams $\chi$ of arbitrary size and $F(\mathbf{t}(q);\chi) = q^{\intersec(\chi)} + O(q^{m+1})$ for chord diagrams of size less than $2n$.
	Let us show that the last statement holds for each chord diagram of size $2n$.
	To this end we fix a $\chi \in \chords_{2n}$.
	We need to consider the maps $\map\in\maps_{2n}$ such that $\mathsf{Geod}(\map) = \chi$.
	Let us examine what happens if we delete from $\map$ the edge labelled $2n$, immediately to the left of the root corner.
	
	Suppose $\ell_\chi(2n) > 0$.
	Then after deletion the map is still connected and thus corresponds to a bipartite planar map $\map'\in\maps_{2n+2p-2}$ for some $p \geq 1$.
	We can reconstruct $\map$ uniquely from $\map'$ by drawing a new edge in the outer face of $\map'$ from corner $\tfrac12$ to $2n-\tfrac12$.
	In this case
	\begin{align*}
		\mathsf{Geod}(\map) = \mathsf{Short}_{n,p}(\mathsf{Geod}(\map'))
	\end{align*}
	and $w_{\mathbf{t}(q)}(\map) = t_{2p}(q) w_{\mathbf{t}(q)}(\map')$.
	Therefore
	\begin{align*}
		F(\mathbf{t}(q); \chi) &= \sum_{\map \in \maps_{\chi}} w_{\mathbf{t}(q)}(\map)= \sum_{p\geq 1} t_{2p}(q) \sum_{\substack{\chi' \in \mathcal{C}_{2n+2p-2}\\\mathsf{Short}_{n,p}(\chi')=\chi}}\sum_{\map' \in \maps_{\chi'}} w_{\mathbf{t}(q)}(\map')\\
		&=\sum_{p\geq 1} t_{2p}(q) \sum_{\substack{\chi' \in \mathcal{C}_{2n+2p-2}\\\mathsf{Short}_{n,p}(\chi')=\chi}}F(\mathbf{t}(q); \chi')=\sum_{p\geq 1} t_{2p}(q) \sum_{\substack{\chi' \in \mathcal{C}_{2n+2p-2}\\\mathsf{Short}_{n,p}(\chi')=\chi}} q^{\intersec(\chi')} + O(q^{m+1}),
	\end{align*}
	where we used that $t_{2p}(q) = O(q)$ in the last step.
	Then Proposition~\ref{prop:tuttechord} gives the claimed identity.

	Suppose instead that $\ell_\chi(2n)=0$.
	Because the right-hand side of \eqref{eq:chordclaim} vanishes, the same argument applies to show that the maps $\map$ that do not disconnect after edge deletion contribute $O(q^{m+1})$ to $F(\mathbf{t}(q);\chi)$.
	It remains to identify the contribution of maps $\map$ that decompose into a pair of bipartite planar maps $\map_1$ and $\map_2$, where we allow $\map_1$ and/or $\map_2$ to consist of a single vertex (zero perimeter).
	We must have that $\mathsf{Geod}(\map_1)=\chi_1$ and $\mathsf{Geod}(\map_2)=\chi_2$ where $\chi_1$ and $\chi_2$ are the, possibly empty, subdiagrams of $\chi$ sitting to the left and to the right of the chord $\{2n,\chi(2n)\}$.
	Since $w_{\mathbf{t}(q)}(\map) = w_{\mathbf{t}(q)}(\map_1)w_{\mathbf{t}(q)}(\map_2)$, we have
	\begin{align*}
		F(\mathbf{t}(q); \chi) &= \sum_{\map_1 \in \maps_{\chi_1}} w_{\mathbf{t}(q)}(\map_1)\sum_{\map_2 \in \maps_{\chi_2}} w_{\mathbf{t}(q)}(\map_2) + O(q^{m+1}) \\
		&= F(\mathbf{t}(q); \chi_1)F(\mathbf{t}(q); \chi_2)+ O(q^{m+1}).
	\end{align*}
	But $\chi_1$ and $\chi_2$ both have size less than $2n$ and $\intersec(\chi) = \intersec(\chi_1) + \intersec(\chi_2)$, so
	\begin{align*}
		F(\mathbf{t}(q); \chi) &= q^{\intersec(\chi_1)}q^{\intersec(\chi_2)} + O(q^{m+1}) = q^{\intersec(\chi)} + O(q^{m+1}).
	\end{align*}
	This finishes the proof by induction.		
\end{proof}

Finally, we provide the details for the probabilistic interpretation.

\begin{proof}[Proof of Corollary~\ref{cor:probabilistic}]
	Recall from the discussion in the introduction that the sum $\sum_{\map\in\maps_\chi} |w_{\mathbf{t}(q)}(\map)| = F(|\mathbf{t}(q)|;\chi)$ converges if and only if $F_{2n}(|\mathbf{t}(q)|) < \infty$ for $n \geq 1$, i.e.\ when $|\mathbf{t}(q)|$ is an admissible weight sequence.
	Recall also from \eqref{eq:admissible} that this happens precisely when 
	\begin{align}
		g_q(r) \coloneqq r - \sum_{k=1}^\infty |t_{2k}(q)|\, \binom{2k-1}{k} r^k = 1\label{eq:admcondition}
	\end{align}
	has a positive solution.

	From here on we assume $0<q<1$.
	First we observe that 
	\begin{align}
		-t_2(q) &= -1 + (1-q) \sum_{\ell\geq 1} (2\ell-1) q^{\binom{\ell}{2}} \nonumber \\
		&\geq -1 + (1-q)\left(-\frac{1}{2} + \frac{3}{2}\sum_{\ell\geq 1} q^{\binom{\ell}{2}} \sum_{p=0}^{\ell-1} q^p\right)\nonumber\\
		&= -1 + (1-q)\left(-\frac{1}{2} + \frac{3}{2}\frac{1}{1-q}\right) = \frac{q}{2}.\label{eq:t2lowerbound}
	\end{align}
	In particular, since this is positive, we have for all $k\geq 1$ that
	\begin{align}
		|t_{2k}(q)| = -\delta_{k,1} + (1-q)^k\sum_{\ell \geq k} q^{\binom{\ell}{2}} \frac{2\ell-1}{2k-1}\binom{\ell+k-2}{\ell-k}.
	\end{align}
	Using the crude upper bound $q^{\binom{\ell}{2}} \leq q^{\ell-1}$ leads to
	\begin{align}
		|t_{2k}(q)| \leq -\delta_{k,1} + (1-q)^k\sum_{\ell \geq k} q^{\ell-1} \frac{2\ell-1}{2k-1}\binom{\ell+k-2}{\ell-k} = -\delta_{k,1} + \frac{1+q}{q} \left(\frac{q}{1-q}\right)^{k}.
	\end{align}
	It follows that the left-hand side of \eqref{eq:admcondition} has radius of convergence at least $\frac{1-q}{4q}$, and that for $0 \leq r < \frac{1-q}{4q}$ we have the lower bound 
	\begin{align}
		g_q(r) &\geq 2r - \frac{1+q}{q}\sum_{k=1}^\infty  \left(\frac{q r}{1-q}\right)^{k}\, \binom{2k-1}{k}= 2r - \frac{1+q}{2q} \left( \frac{1}{\sqrt{1 - 4 \frac{qr}{1-q}}}-1\right).
	\end{align}
	In particular, for $r = \frac{1-q}{12 q}$ the right-hand side equals $(8+4q-\sqrt{54}(1+q))/(12q)$, which exceeds $1$ for $q \leq (8-\sqrt{54})/(8+\sqrt{54})=0.0424\ldots$.
	Since $g_q$ is analytic and $g_q(0) = 0$, \eqref{eq:admcondition} has a positive solution for each $q \in (0,0.042)$, proving admissibility in this regime.

	Using that $(2\ell-1)\binom{\ell}{2} \geq \frac{1}{2}(\ell+1)\ell(\ell-1) = \sum_{p=0}^{\ell-1} (\binom{\ell}{2}+p)$ for all $\ell \geq 1$, we find the lower bound
	\begin{align*}
		|t_4(q)| &= \frac{(1-q)^2}{3} \sum_{\ell \geq 2} (2\ell-1)\binom{\ell}{2} q^{\binom{\ell}{2}} \\
		&\geq \frac{(1-q)^2}{3} \sum_{\ell \geq 2}q^{\binom{\ell}{2}} \sum_{p=0}^{\ell-1} \left(\binom{\ell}{2}+p\right) q^p = \frac{(1-q)^2}{3} \sum_{p=0}^\infty p q^p = \frac{q}{3}.
	\end{align*}
	Combining with \eqref{eq:t2lowerbound}, we find for all $r\geq 0$ that
	\begin{align*}
		g_q(r) \leq r - |t_2(q)| r - 3 |t_4(q)| r^2 \leq r - \frac{q}{2} r -q r^2 \leq \frac{(2-q)^2}{16 q},
	\end{align*}
	where the last inequality follows from evaluating at the maximum at $r = (2-q)/(4q)$. 
	Hence \eqref{eq:admcondition} has no positive solution when $(2-q)^2 < 16 q$, so when $q > 10 - \sqrt{96} = 0.20204\ldots$.
	This proves $|\mathbf{t}(q)|$ is not admissible for $q > 0.21$.

	Now let $|\mathbf{t}(q)|$ be admissible and let $M_{2n}$ be the $|\mathbf{t}(q)|$-Boltzmann planar map.
	By definition of the latter, it holds for all $n,v \geq 1$ and $\chi \in \mathcal{C}_{2n}$ that
	\begin{align*}
		\mathbb{P}(\mathsf{Geod}(M_{2n}) = \chi,\mathsf{V}(M_{2n})=v) &= \frac{1}{F_{2n}(|\mathbf{t}(q)|)} \sum_{\map\in\maps_{\chi}} w_{|\mathbf{t}(q)|}(\map)\, \ind_{\{\mathsf{V}(\map)=v\}}\\
		&= \frac{(-1)^{v-n-1}}{F_{2n}(|\mathbf{t}(q)|)} \sum_{\map\in\maps_{\chi}}w_{\mathbf{t}(q)}(\map)\ind_{\{\mathsf{V}(\map)=v\}}.
	\end{align*}
	Therefore, multiplying by $(-1)^v$ and summing over $v$ we conclude that
	\begin{align*}
		&\mathbb{P}(\mathsf{Geod}(M_{2n}) = \chi, \,\mathsf{V}(M_{2n})\text{ even}) \,\,-\,\, \mathbb{P}(\mathsf{Geod}(M_{2n}) = \chi,\,\mathsf{V}(M_{2n})\text{ odd})\\
		&\quad = \frac{(-1)^{n-1}}{F_{2n}(|\mathbf{t}(q)|)} \sum_{\map\in\maps_{\chi}}w_{\mathbf{t}(q)}(\map)= \frac{(-1)^{n-1}}{F_{2n}(|\mathbf{t}(q)|)} F(\mathbf{t}(q);\chi) = \frac{(-1)^{n-1}}{F_{2n}(|\mathbf{t}(q)|)}\, q^{\intersec(\chi)},
	\end{align*}
	where we used the absolute convergence of the infinite sum and the last equality follows from Theorem~\ref{thm:main}.	 
\end{proof}
		
\appendix
\section{$q$-functions and identities}\label{sec:qfunctions}
The following formulas are used in this work, see for instance \cite{Gasper1990,Koekoek1996,Koekoek2010,Ernst2012} for an overview.
The \emph{$q$-number} and \emph{$q$-factorial} are 
\begin{align}
	[n]_q = \frac{1-q^n}{1-q}, \quad [n]_q! = \prod_{k=1}^n [k]_q.\label{eq:qnumber}
\end{align}
The \emph{$q$-Pochhammer symbol}
\begin{align}
	(a;q)_n = \prod_{k=0}^{n-1}(1-a q^k)
\end{align}
satisfies the identities
\begin{align}
	(a;q)_n &= \frac{(a;q)_\infty}{(a q^n;q)_\infty},\label{eq:qpochinfty}\\
	(q;q)_n &= (1-q)^n [n]_q!,\label{eq:qfactorialpoch}\\
	(a;q)_\infty &= \sum_{k\geq 0}q^{\binom{k}{2}} \frac{(-a)^k}{(q;q)_k}\label{eq:qinftyseries}\\
	\frac{1}{(a;q)_\infty} &= \sum_{k\geq 0} \frac{a^k}{(q;q)_k}\label{eq:reciprocalqpoch}.
\end{align}
The \emph{$q$-binomial} is defined as
\begin{align}
	\qbinom{n}{m}_q = \frac{[n]_q!}{[m]_q![n-m]_q!} = \frac{(q^{n-m+1};q)_m}{(q;q)_m},\label{eq:qbinom}
\end{align}
which satisfies the $q$-binomial theorems 
\begin{align}
	(a;q)_n = \sum_{k=0}^n q^{\binom{k}{2}}\qbinom{n}{k}_q (-a)^k,\quad \frac{1}{(a;q)_n} = \sum_{k=0}^\infty \qbinom{n+k-1}{k}_q a^k.\label{eq:qbinomtheorem}
\end{align}

The \emph{continuous $q$-Hermite polynomials} $H_n(x|q)$ (see e.g.\ \cite[Sec.~3.26]{Koekoek1996}) are uniquely determined by the three-term recurrence
\begin{align}
	2x H_n(x|q) = H_{n+1}(x|q) + (1-q^n) H_{n-1}(x|q), \quad H_0(x|q) = 1, \quad H_1(x|q) = 2x.\label{eq:cqhermite}
\end{align}
They can be written rather explicitly as 
\begin{align}
	H_n(x|q) = \sum_{k=0}^n \qbinom{n}{k}_q e^{i(n-2k)\theta}, \qquad x = \cos \theta.
\end{align}
In particular, for $n = 2\ell-1$ odd we have
\begin{align}
	H_{2\ell-1}(x|q) = 2\sum_{p=1}^\ell \qbinom{2\ell-1}{\ell-p}_q T_{2p-1}\left(x\right),\label{eq:qhermitechebyshev}
\end{align}
in terms of the Chebyshev polynomials $T_n(x)$.

We will also encounter the \emph{$q$-Stirling numbers  of the first kind} $s_q(n,k)$ and of the \emph{second kind} $S_q(n,k)$ (see \cite{Gould1961} or \cite[Ch.~5]{Ernst2012}). 
They are uniquely determined by the recurrence relations
\begin{align}
	s_q(n+1,k) &= s_q(n,k-1) - [n]_q\, s_q(n,k), \quad s_q(n,0) = \delta_{n,0}, \quad s_q(0,k) = \delta_{0,k}.\label{eq:qstirlingfirst}\\
	S_q(n+1,k) &= S_q(n,k-1) + [k]_q\, S_q(n,k), \quad S_q(n,0) = \delta_{n,0}, \quad S_q(0,k) = \delta_{0,k}.\label{eq:qstirlingsecond}
\end{align}
In particular,
\begin{align}
	s_q(n,1) = (-1)^{n-1} [n-1]_q!\quad \text{and}\quad S_q(n,1) = 1\quad \text{for }n\geq 1.\label{eq:qstirlingk1}
\end{align}
They may be viewed as matrix elements of a pair of infinite lower unitriangular matrices because $s_q(n,n)=S_q(n,n)=1$ and $s_q(n,k) = S_q(n,k) = 0$ for $n < k$, and these matrices are inverses of each other \cite[Thm.~5.2.7]{Ernst2012},
\begin{align}
	\sum_{j=k}^n S_q(n,j)s_q(j,k) = \sum_{j=k}^n s_q(n,j)S_q(j,k) = \delta_{n,k}.\label{eq:qstirlinginverse}
\end{align}
\bibliographystyle{siam}
\bibliography{ref}	
	
\end{document}